\documentclass[12pt]{amsproc}

\usepackage{amsmath, amsthm, amssymb, amsfonts, amscd, graphicx, endnotes, tikz, color, hyperref}

\usepackage{mathtools}

\usepackage[margin=01.5in]{geometry}

\usetikzlibrary{arrows}

\usepackage[nodisplayskipstretch]{setspace}
\setstretch{1.1}

\def\CC{{\mathbb C}}
\def\FF{{\mathbb F}}

\def\QQ{{\mathbb Q}}
\def\PP{{\mathbb P}}
\def\QQ{{\mathbb Q}}
\def\RR{{\mathbb R}}

\def\ZZ{{\mathbb Z}}

\def\Psf{{\mathsf P}}

\def\hhat{{\hat h}}

\def\0{{\mathbf 0}}
\def\1{{\mathbf 1}}

\def\Acal{{\mathcal A}}
\def\Bcal{{\mathcal B}}

\def\Ocal{{\mathcal O}}

\def\Kbar{{\bar K}}

\def\Qbar{\overline{\QQ}}

\def\Aut{\mathrm{Aut}}

\def\ab{\mathrm{ab}}
\def\Gal{\mathrm{Gal}}

\def\PGL{\mathrm{PGL}}

\def\supp{\mathrm{supp}}

\def\max{\mathrm{max}}

\theoremstyle{plain}

\newtheorem{thm}{Theorem}
\newtheorem{conj}{Conjecture}

\newtheorem{prop}[thm]{Proposition}
\newtheorem{lem}[thm]{Lemma}

\theoremstyle{definition}

\newtheorem{rem}{Remark}

\title[Non-abelian arboreal Galois groups]{\vspace{-2cm} Non-abelian arboreal Galois groups \\ associated to PCF rational maps}

\author{Chifan Leung}

\address{Chifan Leung \newline Department of Mathematics \newline Oregon State University \newline Corvallis, Oregon, U.S.A. \newline leungchi@oregonstate.edu}

\author{Clayton Petsche}

\address{Clayton Petsche \newline Department of Mathematics \newline Oregon State University \newline Corvallis, Oregon, U.S.A. \newline petschec@oregonstate.edu}

\thanks{{\em Date:} July 24, 2024}


\begin{document}

\begin{abstract}
We prove that arboreal Galois extensions of number fields are never abelian for post-critically finite rational maps and non-preperiodic base points.  For polynomials, this establishes a new class of known cases of a conjecture of Andrews-Petsche.  Together with a result of Ferraguti-Ostafe-Zannier, this result implies that counterexamples to the conjecture, if they exist, are sparse.  We also prove an auxiliary result on places of periodic reduction for rational maps, which may be of independent interest.
\end{abstract}

\maketitle


\section{Introduction}\label{IntroSect}

Let $K$ be a number field with algebraic closure $\Kbar$.  Let $f(x)\in K(x)$ be a rational map of degree $d\geq2$, viewed as a dynamical system
\[
f:\PP^1(\Kbar)\to\PP^1(\Kbar).
\]
Denote by $f^n=f\circ\dots\circ f$ the $n$-fold composition of $f$ with itself.  Fix a point $\alpha\in \PP^1(K)$, and for each $n\geq1$, define the $n$-th inverse image set of the pair $(f,\alpha)$ by 
\begin{equation*}
f^{-n}(\alpha) = \{\beta\in\PP^1(\Kbar)\mid f^n(\beta)=\alpha\}.
\end{equation*}
Assume that $\alpha$ is not an exceptional point for $f$; this means that the backward orbit $f^{-\infty}(\alpha)=\cup_{n\geq0}f^{-n}(\alpha)$ of $\alpha$ is an infinite set. 

For each $n\geq1$, let $K_n=K_n(f,\alpha)$ be the field generated over $K$ by $f^{-n}(\alpha)$.  Since $f$ is defined over $K$ and the generators of $K_n$ are $f$-images of generators of $K_{n+1}$, we obtain a tower 
\[
K=K_0\subseteq K_1\subseteq K_2\subseteq\dots
\] 
of Galois extensions of $K$.  Define $K_\infty=K_\infty(f,\alpha)=\cup_{n\geq0}K_n(f,\alpha)$.

The Galois group of the extension $K_\infty/K$ acts faithfully on the infinite rooted tree $T_\infty$ whose vertices are indexed by the points of the infinite backward orbit $f^{-\infty}(\alpha)$.  The resulting injective group homomorphism 
\[
\rho :\Gal(K_\infty/K)\hookrightarrow\Aut(T_\infty)
\]
is known as the arboreal Galois representation associated to the pair $(f,\alpha)$.  This construction is well-studied, going back to Odoni (\cite{MR805714}, \cite{MR813379}, \cite{MR962740}, \cite{MR1418355}) who was motivated by a problem of elementary number theory on prime divisors of recursively defined sequences.  Further work has been done by Stoll \cite{MR1174401}, Boston-Jones \cite{MR2318536}, \cite{MR2520459}, Jones \cite{jones:thesis}, \cite{jones:itgaltow}, \cite{jones:denprimdiv}, \cite{MR3220023}, and others.  Much of the work in this area has been in the direction of showing that, in many cases of interest, the arboreal Galois representation $\rho :\Gal(K_\infty/K)\hookrightarrow\Aut(T_\infty)$ is surjective, or at least that its image has finite index in $\Aut(T_\infty)$; see for example \cite{MR3937585,MR4057534,MR3937588}. 
 
In the opposite direction of large-image results, it would be interesting to know precisely when the image of $\rho :\Gal(K_\infty/K)\hookrightarrow\Aut(T_\infty)$ is as small as possible.  In the stable case, that is when $\Gal(K_n/K)$ acts transitively on $f^{-n}(\alpha)$ for all $n\geq1$, this minimality occurs precisely when $\Gal(K_\infty/K)$ is abelian.  However, the question of characterizing when $\Gal(K_\infty/K)$ is abelian remains interesting even in the absence of a stability hypothesis.  In the special case that $f(x)\in K[x]$ is a polynomial, Andrews-Petsche formulated a conjectural characterization, as follows.

If $f(x),g(x)\in K[x]$ and $\alpha,\beta\in K$, and if $L/K$ is a field extension, we say that the pairs $(f,\alpha)$ and $(g,\beta)$ are $L$-conjugate if $g=\varphi^{-1}\circ f\circ \varphi$ and $\alpha=\varphi(\beta)$ for some affine automorphism $\varphi(x)=ax+b\in L[x]$ with $a\neq0$.

\begin{conj}[Andrews-Petsche \cite{MR4150256}, Ferraguti-Ostafe-Zannier \cite{MR4686319}]\label{PolynomialConjecture}
Let $K$ be a number field, let $f(x)\in K[x]$ be a polynomial of degree $d\geq2$, let $\alpha\in K$, and assume that $\alpha$ is not an exceptional point for $\phi$.  Then $\Gal(K_\infty/K)$ is abelian if and only if the pair $(f,\alpha)$ is $K^\ab$-conjugate to the pair $(g,\beta)$ occurring in one of the following two families of examples:
\begin{itemize}
	\item[(i)] $g(x)=x^{d}$ and $\beta=\zeta$, a root of unity in $\Kbar$.
	\item[(ii)] $g(x)=\pm T_d(x)$ is the $d$-th Chebyshev polynomial and $\beta=\zeta+\zeta^{-1}$, where $\zeta$ is a root of unity in $\Kbar$.
\end{itemize}
\end{conj}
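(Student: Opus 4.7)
My plan splits the biconditional. For \textbf{sufficiency}, verify directly: in the pair $(x^d,\zeta)$ with $\zeta$ a root of unity, any $\beta'$ satisfying $(\beta')^{d^n}=\zeta$ is itself a root of unity, so $K_\infty\subseteq K(\mu_\infty)\subseteq K^{\ab}$; for $(\pm T_d,\zeta+\zeta^{-1})$, the Chebyshev semi-conjugacy $\pi(w)=w+w^{-1}$, satisfying $T_d\circ\pi=\pi\circ p_d$ with $p_d(w)=w^d$, pulls the backward orbit back into roots of unity, again placing $K_\infty$ inside $K^{\ab}$. Since the hypothesis allows a further $K^{\ab}$-affine conjugation, abelianness persists throughout the conjugacy class.

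For \textbf{necessity}, assume $\Gal(K_\infty/K)$ is abelian and, after replacing $K$ by $K^{\ab}$ if necessary, assume $K_\infty\subseteq K^{\ab}$. The canonical height identity $\hat h_f(\beta)=\hat h_f(\alpha)/d^n$ for $\beta\in f^{-n}(\alpha)$ yields Galois-stable families of points in $K^{\ab}$ whose average canonical height tends to $0$. I would then play two equidistribution theorems against each other. First, arithmetic equidistribution for dynamics (Baker--Rumely, Chambert-Loir, Favre--Rivera-Letelier) shows that the sets $f^{-n}(\alpha)$ equidistribute at each place $v$ of $K$ to the canonical $f$-invariant measure $\mu_{f,v}$ on the Berkovich Julia set of $f$. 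Second, the $K^{\ab}$-constraint restricts the way such small-height sequences can equidistribute: Bilu-type equidistribution combined with abelian small-points results (Amoroso--Zannier, Amoroso--Dvornicich, Bombieri--Zannier) forces these sequences to track root-of-unity measures. Matching the two must force $\mu_{f,v}$, at some archimedean place, to be the uniform measure on the unit circle or the equilibrium measure on a real interval. A polynomial rigidity theorem (Zdunik, or the classical analysis of polynomials whose Julia sets are smooth curves) then identifies $f$ up to $\Kbar$-affine conjugacy as $x^d$ or $\pm T_d$; the conjugating affine map is shown to lie in $K^{\ab}[x]$ by tracking the field of definition of the Julia-set symmetries. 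Finally, with $f$ fixed, a direct classification of base points whose arboreal towers over $K^{\ab}$ remain abelian pins down $\alpha$ as a root of unity in case~(i) and as $\zeta+\zeta^{-1}$ in case~(ii).

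The \textbf{main obstacle} is the comparison step between arithmetic and dynamical equidistribution inside $K^{\ab}$. Current abelian small-points theorems control the field of definition and the support of small-height sequences, but not sharply enough to identify the dynamical measure $\mu_{f,v}$ outright for every non-exceptional polynomial. The paper sidesteps this by working non-archimedeanly: its auxiliary result on places of periodic reduction supplies, for any post-critically finite map with non-preperiodic base point, a prime at which the ramification in $K_\infty/K$ forces a non-abelian element of the Galois group. Extending this to handle non-PCF polynomials, or marrying it with the archimedean comparison sketched above, is the central remaining difficulty for the full conjecture.
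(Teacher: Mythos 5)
This statement is a conjecture, not a theorem the paper proves; the paper explicitly leaves Conjecture~\ref{PolynomialConjecture} open and contributes only the partial result Theorem~\ref{MainThmIntro} (the case $f$ PCF, $\alpha$ non-preperiodic), which together with \cite{MR4686319} reduces the remaining difficulty to PCF $f$ with preperiodic $\alpha$. You recognize this correctly, and your description of what the paper does and does not establish is accurate. Your sufficiency argument is essentially the standard one and is fine: for $(x^d,\zeta)$ the backward orbit stays in roots of unity, for $(\pm T_d,\zeta+\zeta^{-1})$ the semi-conjugacy $\pi(w)=w+w^{-1}$ pulls it back to roots of unity, and a further $K^{\ab}$-affine conjugation only introduces abelian data. (One small remark: once $\Gal(K_\infty/K)$ is abelian one has $K_\infty\subseteq K^{\ab}$ automatically, so "replacing $K$ by $K^{\ab}$" is unnecessary and, since $K^{\ab}/K$ is infinite, should be phrased as working over a finite subextension if needed.)

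For necessity, however, there is a gap beyond the one you flag. The two equidistribution inputs you want to play off each other live on different heights. The arithmetic--dynamical equidistribution theorem applies to sequences with $\hat h_f\to0$ and produces the $f$-canonical measures $\mu_{f,v}$; the abelian small-points results (Amoroso--Dvornicich, Amoroso--Zannier) and Bilu's theorem concern the \emph{naive} Weil height $h$. Along $f^{-n}(\alpha)$ the canonical height tends to $0$, but the naive height only stays bounded (it tends to the constant $|h-\hat h_f|$ discrepancy), so the abelian lower bound theorems do not force these points toward roots of unity and Bilu-type equidistribution does not kick in. The two heights agree up to $O(1)$ but do \emph{not} agree as limits, and the maps for which one can transfer the comparison cleanly are precisely the powering and Chebyshev maps, which makes the proposed comparison circular in exactly the place you would need it to do work. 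So the "main obstacle" you name is real, but even before one gets to identifying $\mu_{f,v}$ via rigidity, the bridge from abelianness of $K_\infty/K$ to a Bilu-type conclusion is not yet in place. That said, your diagnosis of what would be needed (matching measures at an archimedean place, then a Julia-set rigidity result of Zdunik type) is a sensible road map, and your reading of the paper's strategy --- use periodic reduction plus Lemma~\ref{AbelianSplittingLemma} non-archimedeanly to force non-abelianness in the PCF non-preperiodic case --- is correct.
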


\begin{rem}
In \cite{MR4150256}, Andrews-Petsche incorrectly omitted the case of $-T_d(x)$ among abelian arboreal Galois groups; these maps must be included, because they do give rise to abelian arboreal extensions, but $T_d(x)$ and $-T_d(x)$ are not conjugate to each other when $d$ is odd.  This error was pointed out and corrected by Ferraguti-Ostafe-Zannier \cite{MR4686319}.)
\end{rem}

Conjecture \ref{PolynomialConjecture} remains open, but a number of partial results have been established, including the following.  Recall that a rational map $f(x)\in K(x)$ is said to be {\em postcritically finite}, or {\em PCF}, if every critical point of $f(x)$ in $\PP^1(\Kbar)$ is preperiodic for $f$.
\begin{itemize}
\item[(a)] Conjecture \ref{PolynomialConjecture} has been proved when $f(x)$ is $\Kbar$-conjugate to a powering or Chebyshev map; this is due to Andrews-Petsche \cite{MR4150256}. 
\item[(b)] Conjecture \ref{PolynomialConjecture} has been proved when $K=\QQ$; this is due to results of Ostafe \cite{MR3611309} and Ferraguti-Ostafe-Zannier \cite{MR4686319}, in combination with the Kronecker-Weber theorem. Earlier work of Andrews-Petsche \cite{MR4150256} and Ferraguti-Pagano \cite{MR4216695} had established partial results in this direction.
\item[(c)] Conjecture \ref{PolynomialConjecture} has been proved when $f(x)$ is not PCF; this is due to a result of Ferraguti-Ostafe-Zannier \cite{MR4686319} stating that $\Gal(K_\infty/K)$ is always nonabelian when $f(x)$ is not PCF.
\end{itemize}

According to Conjecture \ref{PolynomialConjecture}, it is expected that $\Gal(K_\infty/K)$ is never abelian when the base point $\alpha$ is not $f$-preperiodic.  The main result of this paper confirms this for all PCF rational maps.

\begin{thm} \label{MainThmIntro}
Let $K$ be a number field, let $f(x)\in K(x)$ be a PCF rational map of degree $d\geq2$, and let $\alpha\in \PP^1(K)$ be a non-preperiodic point for $f$.  Then $\Gal(K_\infty(f,\alpha)/K)$ is not abelian.
\end{thm}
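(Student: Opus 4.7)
The plan is to argue by contradiction: I assume $\Gal(K_\infty/K)$ is abelian and derive a contradiction through a local analysis at a well-chosen non-archimedean place of $K$. The proof proceeds in three stages.

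First, I use the PCF hypothesis to collect structural information. The postcritical set $P = \bigcup_{n \geq 1} f^n(\mathrm{crit}(f))$ is finite and forward-invariant, and since $\alpha$ is non-preperiodic we have $\alpha \notin P$. Hence no point of $\bigcup_n f^{-n}(\alpha)$ is a critical value of $f$, so the preimage tree $T_\infty$ is regular $d$-ary with $|f^{-n}(\alpha)| = d^n$. Moreover $f$ has good reduction outside a finite set $S$ of places, and after enlarging $S$ by finitely many more places we may assume that for every $v \notin S$: (a) the derivatives $f'(f^k(\alpha))$ are $v$-adic units for all $k \geq 0$, and (b) the reduction $\bar\alpha$ differs from the reduction of every point of $P$ at $v$.

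Second, I invoke the auxiliary result (proved separately as a standalone lemma) to produce infinitely many places $v \notin S$ at which $\bar\alpha$ is \emph{periodic} (not merely preperiodic) for $\bar f$. Fix such a $v$ with period $p = p_v$. Since $(f^p)'(\alpha) = \prod_{j=0}^{p-1} f'(f^j(\alpha))$ is a $v$-unit and $f^p(\alpha) \equiv \alpha \pmod v$, Hensel's lemma yields a unique $\beta_1 \in K_v$ with $f^p(\beta_1) = \alpha$ and $\beta_1 \equiv \alpha \pmod v$; iterating, I obtain $\beta_k \in f^{-kp}(\alpha) \cap K_v$ with $\beta_k \equiv \alpha \pmod v$ and $f^p(\beta_k) = \beta_{k-1}$ for every $k \geq 1$. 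The non-preperiodicity of $\alpha$ forces the $\beta_k$ to be pairwise distinct.

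Finally, I extract the contradiction. Each $K(\beta_k)/K$ is a subextension of the abelian extension $K_\infty/K$, hence abelian and Galois. Since $\beta_k \in K(\beta_k) \cap K_v$, and in an abelian extension all primes above a given place behave identically (equal decomposition and inertia), $v$ must split completely in $K(\beta_k)/K$; therefore all $[K(\beta_k):K]$ Galois conjugates of $\beta_k$ lie in $K_v$. Because $\bar\alpha \notin \bar P$, the polynomial $\bar f^{kp}(x) - \bar\alpha$ has no multiple roots in $\bar k_v$, so distinct conjugates of $\beta_k$ reduce to distinct points of $\PP^1(k_v)$, yielding
\[
[K(\beta_k):K] \leq |\bar f^{-kp}(\bar\alpha) \cap \PP^1(k_v)| \leq |\PP^1(k_v)| = |k_v|+1.
\]
On the other hand, $\hhat_f(\beta_k) = \hhat_f(\alpha)/d^{kp} \to 0$ implies the Weil heights $h(\beta_k)$ are uniformly bounded, so Northcott's theorem applied to the infinite set $\{\beta_k\}$ forces $[K(\beta_k):K] \to \infty$, contradicting the displayed inequality. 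I expect the main obstacle to be the auxiliary lemma on periodic reduction itself: establishing the infinitude of places of periodic (rather than merely preperiodic) reduction for a non-preperiodic point requires genuinely new arithmetic-dynamical input, while the remainder of the argument then follows cleanly.
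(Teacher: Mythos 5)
Your proof follows the same skeleton as the paper's (periodic-reduction lemma, Hensel lift to a tower of $K_v$-rational preimages $\beta_k$, abelian splitting to put all conjugates in $K_v$, then a growth contradiction), but your endgame is genuinely different and simpler: where the paper appeals to the equidistribution theorem of Baker--Rumely/Chambert-Loir/Favre--Rivera-Letelier on Berkovich space, you bound $[K(\beta_k):K]\le q_v+1$ by noting the conjugates reduce injectively into $\PP^1(\FF_v)$ (this is where $\bar\alpha\notin\bar P$ enters), and then invoke Northcott. That part of your argument is correct and is a real simplification.

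The genuine gap is your condition (a). You assert that after enlarging $S$ by finitely many places, $f'(f^k(\alpha))$ is a $v$-unit for \emph{all} $k\ge 0$ and \emph{all} $v\notin S$. This is false in general: for $f(z)=z^2-2$ and $\alpha=3$ one has $f'(f^k(\alpha))=2f^k(3)$, and by Silverman's theorem on integral points in orbits infinitely many primes divide some $f^k(3)$, so no finite $S$ works. What you actually need is that $(f^{p_v})'(\alpha)$ is a $v$-unit at the \emph{chosen} place $v$ of periodic reduction, and this is true but requires an argument you have not given. One that works: if $\bar{f}^j(\bar\alpha)$ were a critical point $\bar c$ of $\bar f$ for some $0\le j<p_v$, then $\bar c$ would lie in the $p_v$-cycle of $\bar\alpha$ and hence be periodic; after discarding finitely many $v$, strictly preperiodic critical points of $f$ reduce to strictly preperiodic points, so $c$ must be periodic over $\Kbar$; but then $\bar\alpha=\overline{f^{p_v-j}(c)}$ (using $f^{p_v}(c)=c$, valid since $p_v$ divides the period of $c$) lies in $\bar P$, contradicting your condition (b). Note that this route avoids the Benedetto--Ingram--Jones--Levy attracting-cycles theorem that the paper uses at the analogous point (Step 1 of Proposition \ref{MainNotAbelianProp}), which is another genuine difference. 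But as written your (a) is both false and unjustified, and it is the crux of the Hensel lift, so this is a real gap that you must fill.
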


Thus the current state of knowledge of Conjecture \ref{PolynomialConjecture} may be summarized in the following table.

\bigskip

\begin{center}
\begin{tabular}{|c|c|c|}
 \hline
  &   \begin{tabular}{c} $f(x)\in K[x]$ \\ is not PCF \end{tabular} & \begin{tabular}{c} $f(x)\in K[x]$ \\ is PCF \end{tabular}  \\
 \hline
 \begin{tabular}{c}$\alpha\in K$ \\ is not preperiodic \end{tabular}  &  \begin{tabular}{c} Known to be true by \\ Ferraguti et al. \cite{MR4686319} \end{tabular}  & \begin{tabular}{c} Known to be true by \\ Theorem~\ref{MainThmIntro} \end{tabular} \\
 \hline
 \begin{tabular}{c}$\alpha\in K$ \\ is preperiodic \end{tabular}  & \begin{tabular}{c} Known to be true by \\ Ferraguti et al. \cite{MR4686319} \end{tabular} & \begin{tabular}{c} Open in general, \\ but known to be true \\ in certain cases, \\ such as those cited \\ in (a) and (b) above \end{tabular} \\
  \hline
\end{tabular}
\end{center}

\bigskip

It is therefore now the case that in order the finish the proof of Conjecture \ref{PolynomialConjecture}, or to prove any generalization of this conjecture to rational maps (rather than just polynomials), it now suffices to consider only pairs $(f,\alpha)$ in which $f$ is PCF and $\alpha$ is $f$-preperiodic.  

The proof of Theorem~\ref{MainThmIntro} relies on the simple principle that when $\beta\in \Kbar$ is contained in an abelian extension of $K$, then for each place $v$ of $K$, the completion $K_v$ contains either all, or none, of the $\Gal(\Kbar/K)$-conjugates of $\beta$; partial splitting is impossible in the abelian case (Lemma \ref{AbelianSplittingLemma}).  If certain other conditions on the place $v$ are met, this fact can be iterated infinitely many times along the backward orbit of the base point $\alpha$ to obtain a contradiction in the case that $f$ is PCF, $\alpha$ is not $f$-preperiodic, and $\Gal(K_\infty/K)$ is abelian. The proof uses the dynamical equidistribution theorem on Berkovich space due to Baker-Rumely \cite{MR2244226}, Chambert-Loir \cite{MR2244803}, and Favre-Rivera-Letelier \cite{MR2092012}.  The proof also uses a deep result on attracting cycles in non-Archimedean dynamics due to Benedetto-Ingram-Jones-Levy \cite{MR3265554}.

Theorem \ref{MainThmIntro} would be false if the hypothesis that $\alpha$ is not preperiodic were removed, as the examples $(x^d,\zeta)$ and $(\pm T_d(x),\zeta+\frac{1}{\zeta})$ show.  In terms of our proof, this hypothesis is used in several places.  Since $\alpha$ is not periodic, any traversal along the backward orbit of $\alpha$ always produces an infinite sequence of distinct points, which is needed to use the dynamical equidistribution theorem.  Since $\alpha$ is not strictly preperiodic, it is possible to show (Theorem \ref{PeriodicReductionTheorem}) that there exist infinitely many places $v$ at which $\bar\alpha$ is $\bar{f}$-periodic over the residue field $\FF_v$.  Together with the assumption that $f$ is PCF, the hypothesis that $\alpha$ is not preperiodic is also used crucially to ensure that the forward orbit of $\alpha$ does not meet the critical locus of $f$.

Combining the results of Ferraguti-Ostafe-Zannier \cite{MR4686319} with Theorem~\ref{MainThmIntro} of this paper, we can conclude that examples of abelian arboreal Galois groups are sparse.  To make this idea more precise we use the following definition.  

For a polynomial $f(x)\in\Qbar[x]$ and a base point $\alpha\in\Qbar$ denote by $\QQ(f,\alpha)$ the extension of $\QQ$ generated by $\alpha$ and the coefficients of $f(x)$.  We say that $(f,\alpha)\in\Qbar[x]\times\Qbar$ is a {\em potentially abelian arboreal pair} if there exists a finite extension $K/\QQ(f,\alpha)$ for which $\Gal(K_\infty(f,\alpha)/K)$ is abelian.  Thus potentially abelian arboreal pairs include $(x^d,\zeta)$ and $(\pm T_d(x),\zeta+\frac{1}{\zeta})$ for roots of unity $\zeta$.  Moreover, any $\Qbar$-conjugate of a potentially abelian arboreal pair is again potentially abelian.  

\begin{thm}\label{IntroSparsityTheorem}
Let $d\geq2$ be an integer.  For each $T\geq1$, the set of all potentially abelian arboreal pairs $(f,\alpha)\in \Qbar[x]\times\Qbar$ with $\deg(f)=d$ and $[\QQ(f,\alpha):\QQ]\leq T$ is contained in a finite union of $\Qbar$-conjugacy classes in $\Qbar[x]\times\Qbar$.
\end{thm}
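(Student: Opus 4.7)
The plan is to combine Theorem~\ref{MainThmIntro} of the present paper with the result of Ferraguti-Ostafe-Zannier \cite{MR4686319} to reduce to pairs $(f,\alpha)$ with $f$ PCF and $\alpha$ preperiodic, and then apply known Northcott-type finiteness theorems for dynamical moduli.

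The first step is to observe that any potentially abelian arboreal pair $(f,\alpha)\in\Qbar[x]\times\Qbar$ must satisfy both that $f$ is PCF and that $\alpha$ is $f$-preperiodic. Let $K/\QQ(f,\alpha)$ be a finite extension witnessing the potentially abelian property, so that $\Gal(K_\infty(f,\alpha)/K)$ is abelian. The Ferraguti-Ostafe-Zannier theorem cited in item (c) of the introduction forces $f$ to be PCF, while Theorem~\ref{MainThmIntro} applied over the number field $K$ forces $\alpha$ to be $f$-preperiodic. It therefore suffices to prove the stronger statement that the set of PCF-preperiodic pairs $(f,\alpha)$ of degree $d$ with $[\QQ(f,\alpha):\QQ]\leq T$ meets only finitely many $\Qbar$-conjugacy classes.

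The second step uses a known Northcott-type theorem for PCF polynomials. On the moduli space $M_d^{\mathrm{poly}}$ of polynomials of degree $d$ modulo affine conjugation, the critical height $h_{\mathrm{crit}}(f)=\sum_{c}\hhat_f(c)$ (summed over the critical points) dominates a natural Weil height $h_M$ up to additive and multiplicative constants depending only on $d$. In particular, PCF polynomials (for which $h_{\mathrm{crit}}(f)=0$) have moduli height bounded purely in terms of $d$, so by Northcott's theorem the PCF polynomials of degree $d$ defined over number fields of degree at most $T$ form finitely many $\Qbar$-conjugacy classes. Fix representatives $f_1,\dots,f_N$.

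The third step handles the base point for each fixed $f_i$. The $f_i$-preperiodic points in $\Qbar$ are exactly those with $\hhat_{f_i}(\alpha)=0$, so Northcott's theorem applied to $\hhat_{f_i}$ yields only finitely many such $\alpha$ of bounded degree over $\QQ$. Combined with the finiteness of the affine automorphism group of each $f_i$, this produces finitely many $\Qbar$-conjugacy classes of pairs. The main bookkeeping obstacle is to verify that a pair $(f,\alpha)$ with $[\QQ(f,\alpha):\QQ]\leq T$ conjugates to some $(f_i,\beta)$ for which $[\QQ(\beta):\QQ]$ is bounded in terms of $d$ and $T$ alone, since the conjugating affine map $\varphi$ need not be defined over $\QQ(f,\alpha)$; this is handled by choosing each $f_i$ in a normal form (e.g., monic and centered) so that the degree of $\varphi$ is controlled by that of the pair.
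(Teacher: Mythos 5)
Your proposal is correct and follows the same overall strategy as the paper: reduce via Theorem~\ref{MainThmIntro} and Ferraguti--Ostafe--Zannier to pairs with $f$ PCF and $\alpha$ preperiodic, invoke a Northcott-type finiteness for PCF polynomials of degree $d$ defined over number fields of degree $\leq T$ (which is exactly the bounded-height statement of Ingram that the paper cites), and then invoke Northcott for the canonical height to bound the preperiodic base point.

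Where you differ from the paper is in how you control the field of definition of the conjugating affine map $\varphi$, which you correctly identify as the main obstacle. The paper isolates this in Lemma~\ref{DegreeBoundLemma}: one adjoins a non-totally-ramified fixed point $\alpha_1$ of $f$, a preimage $\alpha_2\in f^{-1}(\alpha_1)\setminus\{\alpha_1\}$, the corresponding points for $g$, and reads off $\varphi$ from the two triples $\{\infty,\alpha_1,\alpha_2\}$, $\{\infty,\beta_1,\beta_2\}$, giving the bound $[L:K]\leq d^2(d+1)^2$ in the polynomial case; this version of the lemma works just as well for rational maps with $d^4(d+1)^2$. You instead propose choosing each representative $f_i$ in monic, centered normal form. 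Then conjugating a given $f$ to monic centered form only requires a $(d-1)$-st root of the leading coefficient (degree $\leq d-1$), centering is rational, and the residual conjugation between two monic centered polynomials lies in $\mu_{d-1}$; so $\varphi$ is defined over an extension of $\QQ(f,\alpha)$ of degree $O(d^2)$. This is more elementary and gives a slightly better constant in the polynomial setting, though it relies specifically on the affine automorphism group and normal form theory for polynomials, whereas the paper's Lemma~\ref{DegreeBoundLemma} is uniform across rational maps. You leave the normal-form degree bookkeeping at the sketch level, but the argument fills in without difficulty; the proposal is sound.
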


Since Conjecture \ref{PolynomialConjecture} is known in all cases where $\Gal(K_\infty(f,\alpha)/K)$ is expected to be abelian, we may interpret Theorem \ref{IntroSparsityTheorem} as implying that counterexamples to Conjecture \ref{PolynomialConjecture}, if they exist, are sparse.

The authors would like to thank Tom Tucker for helpful discussions.

\section{The dynamics of rational maps with good reduction}

Let $K$ be a number field, and let $v$ be a non-Archimedean place of $K$.  Fix any choice of normalized absolute value $|\cdot|_v$ associated to $v$.  Let 
\[
\Ocal_v=\{x\in K_v\mid|x|_v\leq1\}
\] 
be the ring of $v$-adic integers in $K_v$, and let $\pi_v\in\Ocal_v$ be a uniformizer; thus $\pi_v\Ocal_v$ is the unique maximal ideal of $\Ocal_v$.  Let $\FF_v=\Ocal_v/\pi_v\Ocal_v$ be the residue field, and let $x\mapsto\bar x$ denote the reduction map $\Ocal_v\to\FF_v$.  Denote by $\epsilon_v=|\pi_v|_v$ the absolute value of the uniformizer.  Thus $|K_v^\times|_v=\epsilon_v^\ZZ$ is the value group of the local field $K_v$.

Using homogeneous coordinates $(x_1:x_2)$ on $\PP^1(K_v)$, we can identify $\PP^1(K_v)$ with $K_v\cup\{\infty\}$ by the correspondence $(\alpha:1)=\alpha$ for $\alpha\in K_v$, and $(1:0)=\infty$.  The reduction map $\Ocal_v\to\FF_v$ can be extended to a map $\PP^1(K_v)\to \PP^1(\FF_v)$ on projective spaces in which, for each $x\in \PP^1(K_v)$, we select homogenous coordinates $x=(x_1,x_2)$ with $\max(|x_1|_v,|x_2|_v)=1$, and we define $\bar{x}=(\bar{x}_1,\bar{x}_2)$ in $\PP^1(\FF_v)$.

Recall that the projective metric $\delta_v(\cdot,\cdot)$ on $\PP^1(K_v)$ is defined by
\begin{equation*}
\delta_v(x,y) = \frac{|x_1y_2-x_2y_1|_v}{\max(|x_1|_v,|x_2|_v)\max(|y_1|_v,|y_2|_v)},
\end{equation*}
where $x=(x_1:x_2)$ and $y=(y_1:y_2)$ are points in $\PP^1(K_v)$.  It is well known that this defines a metric on $\PP^1(K_v)$, and that given two points $x,y\in\PP^1(K_v)$, we have $0\leq \delta_v(x,y)\leq 1$, with $\delta_v(x,y)<1$ if and only if $\bar{x}=\bar{y}$ in $\PP^1(\FF_v)$.  For proofs of these facts see \cite{MR2316407} $\S$ 2.1-2.3.  Note also that when $x,y\in\Ocal_v$, we have $\delta_v(x,y)=\delta_v((x:1),(y:1))=|x-y|_v$, and thus the projective metric coincides with the ordinary $v$-adic metric on $v$-integral points of $K_v$.

For each choice of center $c\in\PP^1(K_v)$ and radius $r\in \epsilon_v^\ZZ$, we denote the disc of radius $r$ about $c$ by $D_r(c) = \{x\in\PP^1(K_v)\mid \delta_v(x,c)\leq r\}$.  Letting $q_v=|\FF_v|$ denote the size of the residue field, it follows from the observations in the preceding paragraph that $\PP^1(K_v)$ is a disjoint union of $q_v+1$ discs of radius $\epsilon_v$, and that these discs are precisely the fibers of the reduction map $\PP^1(K_v)\to \PP^1(\FF_v)$.

Let $f(x)\in K_v(x)$ be a rational map of degree $d\geq2$.  We declare that $f$ has {\em good reduction} if $f(x)=p(x)/q(x)$ for polynomials $p(x),q(x)\in\Ocal_v[x]$ such that: 
\begin{itemize}
\item[(i)] $\max(\deg p,\deg q)=d$; 
\item[(ii)] $\max(\deg \bar{p},\deg \bar{q})=d$, where $\bar{p}(x),\bar{q}(x)\in\FF_v[x]$ are the polynomials obtained by reducing modulo $\pi_v$ the coefficients of $p(x)$ and $q(x)$; and 
\item[(iii)] $\bar{p}(x)$ and $\bar{q}(x)$ have no common roots in $\overline{\FF}_v$.  
\end{itemize}

In particular, when $f$ has good reduction at $v$, it induces a well-defined rational map $\bar{f}(x)\in\FF_v(x)$ of degree $d$ defined over the residue field $\FF_v$.  Moreover, such $f(x)$ is non-expanding in the sense that $\delta_v(f(x),f(y))\leq\delta_v(x,y)$ for all $x,y\in\PP^1(K_v)$, and the diagram
\begin{equation*}
\begin{CD}
\PP^1(K_v)  @> f >>   \PP^1(K_v) \\ 
@V \text{reduction} VV                                    @VV \text{reduction} V \\ 
\PP^1(\FF_v)           @> \bar{f} >>     \PP^1(\FF_v)
\end{CD} 
\end{equation*}
commutes; i.e. $\overline{f(x)}=\bar{f}(\bar{x})$ in $\PP^1(\FF_v)$ for all $x\in\PP^1(K)$.  For proofs of these facts, see \cite{MR2316407} Theorems 2.17 and 2.18.

For a rational map with good reduction, the following Lemma describes the local dynamics at a fixed point over the residue field.

\begin{lem} \label{MainLocalLemma}
Let $K$ be a number field and let $v$ be a non-Archimedean place of $K$. Let $f(x)\in K_v(x)$ be a rational map of degree $d\geq2$ with good reduction, and let $\alpha\in K_v$ such that $|\alpha|_v\leq1$ and $\bar{f}(\bar{\alpha})=\bar\alpha$ in $\PP^1(\FF_v)$.
\begin{itemize}
\item[{\bf (a)}] The map $f$ restricts to a map $f: D_{\epsilon_v}(\alpha)\to D_{\epsilon_v}(\alpha)$.
\item[{\bf (b)}] For all $\beta\in D_{\epsilon_v}(\alpha)$, it holds that $|f'(\beta)|_v\leq1$.
\item[{\bf (c)}] If $|f'(\alpha)|_v<1$ then $|f'(\beta)|_v\leq\epsilon_v$ for all $\beta\in D_{\epsilon_v}(\alpha)$, and $D_{\epsilon_v}(\alpha)$ contains an attracting fixed point of $f$.
\item[{\bf (d)}] If $|f'(\alpha)|_v=1$ then $f: D_{\epsilon_v}(\alpha)\to D_{\epsilon_v}(\alpha)$ is surjective.  
\end{itemize}
\end{lem}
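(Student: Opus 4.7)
The plan is to dispatch parts (a) and (b) by direct computation from the good-reduction data, and then reduce both (c) and (d) to a single $v$-adic Taylor estimate, after which one invokes either the ultrametric contraction principle or Hensel's lemma.

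First I would handle (a) and (b). For (a), a point $\beta$ lies in $D_{\epsilon_v}(\alpha)$ precisely when $\bar\beta = \bar\alpha$ in $\PP^1(\FF_v)$; the commutativity of the reduction diagram then gives $\overline{f(\beta)} = \bar f(\bar\beta) = \bar f(\bar\alpha) = \bar\alpha$, so $f(\beta) \in D_{\epsilon_v}(\alpha)$. For (b), write $f = p/q$ with $p,q \in \Ocal_v[x]$ satisfying the good-reduction conditions. The hypothesis $\bar f(\bar\alpha) = \bar\alpha \in \FF_v$ combined with the no-common-root condition on $\bar p,\bar q$ forces $\bar q(\bar\alpha) \neq 0$; hence $q(\beta)$ is a $v$-adic unit for every $\beta \in D_{\epsilon_v}(\alpha)$, and the quotient-rule formula $f'(\beta) = (p'(\beta)q(\beta) - p(\beta)q'(\beta))/q(\beta)^2$ visibly has norm at most $1$.

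The technical heart of the argument is a $v$-adic Taylor estimate. Expanding $p(\alpha + h)$ and $q(\alpha + h)$ as polynomials in $h$ with $\Ocal_v$-coefficients and inverting $q(\alpha + h)$ by a geometric series (legitimate since $q(\alpha)$ is a unit and the correction has norm at most $\epsilon_v$), I obtain
\[
f(\alpha + h) \;=\; f(\alpha) + f'(\alpha)\, h + R(h), \qquad |R(h)|_v \leq |h|_v^2,
\]
for all $h$ with $|h|_v \leq \epsilon_v$. The same computation, applied to the numerator of $f'$, shows that $\overline{f'(\beta)}$ is constant on $D_{\epsilon_v}(\alpha)$ and equal to $\overline{f'(\alpha)}$.

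Parts (c) and (d) then follow quickly. For (c), the constancy of $\overline{f'}$ together with $|f'(\alpha)|_v < 1$ yields $|f'(\beta)|_v \leq \epsilon_v$ on the whole disc; plugging into the Taylor bound gives $|f(\beta) - f(\gamma)|_v \leq \epsilon_v\, |\beta - \gamma|_v$ for all $\beta,\gamma \in D_{\epsilon_v}(\alpha)$, so $f$ is a strict ultrametric contraction on a complete, $f$-invariant set, and the Banach fixed-point theorem produces a unique attracting fixed point. For (d), when $|f'(\alpha)|_v = 1$ the linear term dominates the quadratic remainder, so the Taylor identity forces $|f(\beta) - f(\gamma)|_v = |\beta - \gamma|_v$, making $f$ an isometry on the disc; surjectivity then falls to Hensel's lemma applied to $g(x) := f(x) - y$ for any $y \in D_{\epsilon_v}(\alpha)$, since $|g(\alpha)|_v \leq \epsilon_v < 1 = |g'(\alpha)|_v^2$ produces a root $\beta$ with $|\beta - \alpha|_v \leq |g(\alpha)|_v \leq \epsilon_v$. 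I expect the main nuisance to be establishing the remainder estimate $|R(h)|_v \leq |h|_v^2$ cleanly from the $p/q$ form; this is routine but demands care, since the same estimate underpins both (c) and (d).
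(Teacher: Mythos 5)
Your proposal is correct and follows essentially the same route as the paper: parts (a)--(b) via the good-reduction diagram and the quotient-rule estimate with $|q|_v=1$ on the disc, part (c) via a $v$-adic Taylor expansion plus the Banach fixed-point theorem, and part (d) via Hensel's lemma. The one expository slip is in (c): you state the Taylor estimate only for expansions centered at $\alpha$, i.e.\ $f(\alpha+h)=f(\alpha)+f'(\alpha)h+R(h)$ with $|R(h)|_v\le|h|_v^2$, but the inequality $|f(\beta)-f(\gamma)|_v\le\epsilon_v|\beta-\gamma|_v$ needs the same estimate centered at an arbitrary $\gamma\in D_{\epsilon_v}(\alpha)$. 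This is exactly what the paper does, and the generalization is immediate (the key facts $\gamma\in\Ocal_v$ and $|q(\gamma)|_v=1$ persist for all such $\gamma$), but you should state it. One further small point: applying Hensel's lemma directly to $g(x)=f(x)-y$ requires either a power-series/analytic form of Hensel or passing to the polynomial $F(x)=p(x)-yq(x)$ (as the paper does), whose zeros on the disc coincide with those of $g$ since $q$ is a unit there; in the polynomial formulation you would then verify $|F'(\alpha)|_v=1$, which follows from $|f'(\alpha)|_v=1$ and $|q(\alpha)|_v=1$ by the strong triangle inequality.
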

\begin{proof}$\,$

{\bf (a)}  The assumption $\bar{f}(\bar{\alpha})=\bar\alpha$ means that $\delta_v(f(\alpha),\alpha)<1$, and so actually that $\delta_v(f(\alpha),\alpha)\leq\epsilon_v$; in other words $f(\alpha)\in D_{\epsilon_v}(\alpha)$.  Since any point in a non-Archimedean disc can be the center, we then have $D_{\epsilon_v}(f(\alpha))=D_{\epsilon_v}(\alpha)$.  Given any $\beta\in D_{\epsilon_v}(\alpha)$, the non-expanding property of maps with good reduction (\cite{MR2316407} Theorem 2.17) implies that
\begin{equation*}
\delta_v(f(\alpha),f(\beta)) \leq \delta_v(\alpha,\beta) \leq\epsilon_v
\end{equation*}
and hence $f(D_{\epsilon_v}(\alpha))\subseteq D_{\epsilon_v}(f(\alpha))=D_{\epsilon_v}(\alpha)$.

{\bf (b)}  Write $f(x)=p(x)/q(x)$ for $p(x),q(x)\in\Ocal_v[x]$ as in the definition of good reduction.  For $\beta\in D_{\epsilon_v}(\alpha)$ we have 
\begin{equation}\label{xInDiscEstimates}
\begin{split}
|q(\beta)|_v & = 1 \\
|f'(\beta)|_v & = |q(\beta)p'(\beta)-p(\beta)q'(\beta)|_v\leq1.
\end{split}
\end{equation}
Indeed, $\bar{\beta}=\bar{\alpha}$ and so $\bar{f}(\bar{\beta})=\bar{f}(\bar{\alpha})=\bar\alpha\neq\infty$ in $\PP^1(\FF_v)$, we deduce that $\bar{q}(\bar{\beta})\neq0$ in $\FF_v$.  The first identity in (\ref{xInDiscEstimates}) follows immediately, and from this the second part of (\ref{xInDiscEstimates}) follows from the quotient rule and the strong triangle inequality.  

{\bf (c)}  From $|f'(\alpha)|_v<1$, it follows from the quotient rule and $|q(\alpha)|_v=1$ that $\bar{f}'(\bar\alpha)=0$ in $\FF_v$.  Given any $\beta\in D_{\epsilon_v}(\alpha)$ we have $\bar{\beta}=\bar\alpha$ and hence $\bar{f}'(\bar{\beta})=\bar{f}'(\bar{\alpha})=0$ in $\FF_v$, so $|f'(\beta)|_v<1$.  In particular, $|f'(\beta)|_v\leq \epsilon_v$.

Fix arbitrary $\beta,\gamma\in D_{\epsilon_v}(\alpha)$.  Then expanding in a power series about $\gamma$ we have
\begin{equation}
\begin{split}
p(x) & = p(\gamma) + p'(\gamma)(x-\gamma) + a_2(x-\gamma)^2+\dots \\
q(x) & = q(\gamma) + q'(\gamma)(x-\gamma) + b_2(x-\gamma)^2+\dots \\
\end{split}
\end{equation}
for some $a_j,b_j\in\Ocal_v$ depending on $\gamma$.  That the $a_j$ and $b_j$ are in $\Ocal_v$ follows from the fact that the $a_j$ are in fact the coefficients of $P(x)=p(x+\gamma)$ which is in $\Ocal_v[x]$ because both $\gamma$ and the coefficients of $p(x)$ are in $\Ocal_v$; similarly for the $b_j$.

Substituting $x=\beta$ in each of the above expansions of $p(x)$ and $q(x)$ and simplifying we have
\begin{equation*}
\begin{split}
f(\beta)-f(\gamma)& = \frac{p(\beta)q(\gamma)-p(\gamma)q(\beta)}{q(\beta)q(\gamma)} \\
	& = \frac{(p'(\gamma)q(\gamma)-p(\gamma)q'(\gamma))(\beta-\gamma)+\theta(\beta-\gamma)^2}{q(\beta)q(\gamma)} \\
\end{split}
\end{equation*}
where $|\theta|_v\leq1$.  Using that $|q(\gamma)|_v=|q(\beta)|_v=1$, part {\bf (b)} of this lemma, and that $|f'(\gamma)|_v=|p'(\gamma)q(\gamma)-p(\gamma)q'(\gamma)|_v\leq\epsilon_v$ and $|\beta-\gamma|_v\leq \epsilon_v$ we obtain 
\begin{equation*}
|f(\beta)-f(\gamma)|_v \leq \epsilon_v|\beta-\gamma|_v.
\end{equation*}
We conclude that the map $f: D_{\epsilon_v}(\alpha)\to D_{\epsilon_v}(\alpha)$ is a contraction, and the Banach fixed-point theorem provides a unique attracting fixed point of $f$ in $D_{\epsilon_v}(\alpha)$.

{\bf (d)}  Consider $\beta\in D_{\epsilon_v}(\alpha)$, and we seek $\gamma\in D_{\epsilon_v}(\alpha)$ for which $f(\gamma)=\beta$.  Such a $\gamma$ would be a root of $F(x)=p(x)-\beta q(x)$ in $D_{\epsilon_v}(\alpha)$, and would exist by Hensel's Lemma, as long as we can check the hypotheses that
\begin{equation}\label{HenselConditions}
\begin{split}
|F(\alpha)|_v & <1\text{, and } \\
|F'(\alpha)|_v & =1.
\end{split}
\end{equation}

Since $\bar{f}(\bar\alpha)=\bar\alpha=\bar\beta$, we have $f(\alpha)=\frac{p(\alpha)}{q(\alpha)}=\beta+\theta$ for $\theta\in K_v$ with $|\theta|_v<1$.  Therefore
\[
|F(\alpha)|_v = |p(\alpha)-\beta q(\alpha)|_v = |\theta q(\alpha)|_v < 1
\]
as $|q(\alpha)|_v=1$; this completes the first part of (\ref{HenselConditions}).  We also have
\begin{equation*}
\begin{split}
F'(\alpha) & = p'(\alpha)-\beta q'(\alpha) \\
	& = p'(\alpha)-\left(\frac{p(\alpha)}{q(\alpha)}-\theta\right)q'(\alpha) \\
	& = \frac{p'(\alpha)q(\alpha)-p(\alpha)q'(\alpha)}{q(\alpha)} +\theta q'(\alpha) \\
\end{split}
\end{equation*}
By hypothesis, we have $|f'(\alpha)|_v=|p'(\alpha)q(\alpha)-p(\alpha)q'(\alpha)|_v=1$, and this together with $|q(\alpha)|_v=1$ and $|\theta q'(\alpha)|_v\leq|\theta|_v<1$ shows that $|F'(\alpha)|_v=1$ by the case of equality in the strong triangle inequality, completing the proof of (\ref{HenselConditions}) and of the Lemma.
\end{proof}

\section{A result on periodic reduction}

The following result is needed in the proof of Theorem \ref{MainThmIntro}, and may also be of independent interest.  Given a rational map $f(x)\in K(x)$ and a point $\alpha\in \PP^1(K)$, we say that $\alpha$ is {\em strictly preperiodic} with respect to $f$ if $\alpha$ is preperiodic but not periodic.

\begin{thm}\label{PeriodicReductionTheorem}
Let $K$ be a number field, let $f(x)\in K(x)$ be a rational map of degree $d\geq2$, and let $\alpha\in \PP^1(K)$ be a point which is not strictly preperiodic for $f$.  Then there exist infinitely many non-Archimedean places $v$ of $K$ at which $f$ has good reduction and at which $\bar\alpha$ is $\bar{f}$-periodic in $\PP^1(\FF_v)$.
\end{thm}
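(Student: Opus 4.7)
If $f^p(\alpha)=\alpha$ for some $p\geq 1$, then at each of the cofinitely many non-archimedean places $v$ at which $f$ has good reduction, the commutativity of reduction with $f$ gives $\bar f^p(\bar\alpha)=\overline{f^p(\alpha)}=\bar\alpha$, so $\bar\alpha$ is $\bar f$-periodic at infinitely many places.

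\textbf{Case 2: $\alpha$ is wandering.} I argue by contradiction. Suppose the set
\[
V = \{v\text{ non-arch}: f\text{ has good red.\ at }v,\ \bar\alpha \text{ is } \bar f\text{-periodic in }\PP^1(\FF_v)\}
\]
is finite, and let $S$ be the finite union of $V$ with the archimedean places and the non-archimedean bad-reduction places. Extending $\delta_v$ to the archimedean places by the same max-norm formula, the product formula yields the standard identity
\[
\sum_v -\log\delta_v(x,y) \;=\; h(x)+h(y)\qquad (x,y\in\PP^1(K),\ x\neq y).
\]
Applied to $(f^n(\alpha),\alpha)$ (distinct because $\alpha$ is not periodic), and using $h(f^n(\alpha)) = d^n\hat h_f(\alpha) + O(1)$ together with $\hat h_f(\alpha)>0$ (Northcott: wandering points have positive canonical height), the right-hand side grows like $d^n\hat h_f(\alpha)$. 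For every $v\notin S$, good reduction combined with non-periodicity of $\bar\alpha$ forces $\bar f^n(\bar\alpha)\neq\bar\alpha$ and hence $\delta_v(f^n(\alpha),\alpha)=1$, so the contribution from outside $S$ vanishes and
\[
\sum_{v\in S}-\log\delta_v(f^n(\alpha),\alpha) \;\sim\; d^n\hat h_f(\alpha).
\]

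Since $S$ is finite, the contradiction comes by bounding each local summand on the left by $o(d^n)$. At a place $v\in V$ this bound can be extracted directly from Lemma~\ref{MainLocalLemma}: in the attracting subcase (c) the iterates $f^{np}(\alpha)$ converge to an attracting fixed point $\alpha^*\in D_{\epsilon_v}(\alpha)$ that must differ from $\alpha$ (since $\alpha$ is wandering), so $\delta_v(f^n(\alpha),\alpha)$ stays bounded below and the contribution is $O(1)$; in the indifferent subcase (d), $f^p$ acts as an ultrametric isometric bijection of $D_{\epsilon_v}(\alpha)$, and a pigeonhole argument over sub-discs shows that the cycle length at scale $q_v^{-k}$ must grow with $k$ (else $\alpha$ would be periodic), yielding a sub-exponential bound $-\log\delta_v(f^n(\alpha),\alpha) = o(d^n)$. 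At the archimedean and bad-reduction places in $S$ the required $o(d^n)$ bound is Silverman's 1993 theorem on rates of approach in orbits, applicable with $\beta=\alpha$ because wandering points, being non-preperiodic, are non-exceptional. Summing these $o(d^n)$ bounds over the finite set $S$ gives $o(d^n)$, contradicting $\sim d^n\hat h_f(\alpha)$. The main obstacle is the archimedean (and bad-reduction) contribution, where one genuinely needs Silverman's Diophantine-approximation-based estimate; the good-reduction places can be treated more elementarily via Lemma~\ref{MainLocalLemma}.
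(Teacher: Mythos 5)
Your \textbf{Case 1} is exactly the paper's trivial case, and the overall strategy of \textbf{Case 2} (product formula plus Silverman's rate-of-approach estimate) is a correct and natural route to the result; it is essentially a quantitative version of the paper's alternate proof, which invokes Silverman's integral-points-in-orbits theorem directly. However, your claim that the good-reduction places in $V$ can be handled ``more elementarily'' via Lemma~\ref{MainLocalLemma} has a genuine gap in the indifferent subcase. From the isometric bijection $f^p:D_{\epsilon_v}(\alpha)\to D_{\epsilon_v}(\alpha)$ and the nested discs $D_{\epsilon_v}(\alpha)=D_1\supset D_2\supset\cdots$ containing $\alpha$, one does get periods $N_1\mid N_2\mid\cdots$ for the $D_k$ with $N_k\to\infty$ (else $\alpha$ would be $f^p$-periodic). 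This gives $\delta_v(f^{mp}(\alpha),\alpha)\le \epsilon_v^k$ only when $N_k\mid m$, hence $-\log\delta_v(f^{mp}(\alpha),\alpha)\asymp k(m)$ where $k(m)=\max\{k:N_k\mid m\}$. But the pigeonhole argument by itself gives no rate for $N_k\to\infty$: the stretches where $\ell_i=1$ (so $N_{i}=N_{i-1}$) can \emph{a priori} be arbitrarily long, and nothing local rules out $k(m)$ growing as fast as $d^{mp}$. The only thing that constrains this growth is a \emph{global} Diophantine input — namely the same product-formula-plus-height bound you are trying to prove, i.e., Silverman's estimate — so the ``elementary'' treatment is circular. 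The fix is simply to invoke Silverman's Theorem~E (quantitative rate of approach, $-\log\delta_v(f^n(\alpha),\beta)=o(d^n)$ for non-exceptional $\beta$) uniformly at \emph{every} place $v\in S$, including the good-reduction places; that makes the contradiction with $\sum_{v\in S}-\log\delta_v(f^n(\alpha),\alpha)\sim d^n\hat h_f(\alpha)$ watertight. (One should also be mindful of normalization of $|\cdot|_v$ so that the product formula and the Weil height use consistent weights, and that at archimedean places $\delta_v$ can exceed $1$, so $-\log\delta_v$ can be negative; both are easily handled.)

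For comparison: the paper's first proof avoids Silverman entirely by instead adjoining two backward iterates $\alpha_{-1},\alpha_{-2}$ and showing that the quadruple $\{\alpha_{-2},\alpha_{-1},\alpha,f^m(\alpha)\}$ is $T$-integral for all $m$, reducing to finiteness of solutions to the $S$-unit equation (Lemma~\ref{S-IntegralLemma}); the alternate proof cites Silverman's Theorem 2.2 qualitatively. Your approach unpacks the alternate proof into the product-formula identity and a per-place estimate — a perfectly good route, but no more elementary than the paper's, since Silverman's rate theorem (via Roth) is still doing the heavy lifting at the places where $\bar\alpha$ is $\bar f$-periodic.
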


Benedetto-Ghioca-Kurlberg-Tucker \cite{MR2885573} have proved a complementary result, that when $\alpha\in \PP^1(K)$ is {\em not $f$-periodic}, then there exist infinitely many non-Archimedean places $v$ of $K$ at which $f$ has good reduction and at which $\bar\alpha$ is {\em not $\bar{f}$-periodic} in $\PP^1(\FF_v)$. 

We offer two proofs of Theorem \ref{PeriodicReductionTheorem}, both of which rely ultimately on well-known results in Diophantine approximation.  

Let $S$ be a finite set of places of $K$ including all of the Archimedean places.  Given distinct points $x,y\in\PP^1(K)$, we say that the pair $\{x,y\}$ is {\em $S$-integral} if $\bar{x}\neq\bar{y}$ in $\PP^1(\FF_v)$ for all places $v$ of $K$ outside $S$.  If $X$ is any subset of $\PP^1(K)$, we say that the set $X$ is {\em $S$-integral} if, for all pairs $x,y$ of distinct points in $X$, the pair $\{x,y\}$ is $S$-integral.

Thus to say that the set $\{\infty, x\}$ is $S$-integral is the same thing as saying that $|x|_v\leq1$ for all $v\notin S$, which is also equivalent to the statement that $x$ is an element of the ring $\Ocal_S$ of $S$-units in $K$.  To say the set $\{0,\infty,x\}$ is $S$-integral is the same as saying that $x$ is an $S$-unit, in other words that $|x|_v=1$ for all places $v\notin S$.

\begin{lem}\label{FiniteSLemma}
Let $K$ be a number field and let $X$ be a finite subset of $\PP^1(K)$.  Then there exists a finite set $S$ of places of $K$ such that $X$ is $S$-integral.
\end{lem}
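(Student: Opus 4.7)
The plan is to reduce to handling a single pair at a time and then invoke the basic fact that a nonzero element of $K$ has absolute value one at all but finitely many non-Archimedean places. Since $X$ is finite, there are only finitely many unordered pairs of distinct points in $X$, and $S$-integrality is a pairwise condition; so if for each such pair $\{x,y\}$ the set $\{x,y\}$ is $S_{xy}$-integral for some finite $S_{xy}$, then $X$ itself is $S$-integral for $S$ equal to the union of the Archimedean places of $K$ together with all the sets $S_{xy}$.

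For a fixed pair of distinct points $x, y \in \PP^1(K)$, I would pick homogeneous representatives $x = (x_1:x_2)$ and $y = (y_1:y_2)$ with $x_i, y_i \in K$, and use the defining formula
\[
\delta_v(x, y) = \frac{|x_1 y_2 - x_2 y_1|_v}{\max(|x_1|_v, |x_2|_v)\,\max(|y_1|_v, |y_2|_v)}
\]
together with the fact, recalled in Section 2, that $\bar{x} \neq \bar{y}$ in $\PP^1(\FF_v)$ is equivalent to $\delta_v(x, y) = 1$. Because $x \neq y$, the element $z = x_1 y_2 - x_2 y_1 \in K$ is nonzero; and because $(x_1:x_2)$ and $(y_1:y_2)$ are legitimate projective points, at least one coordinate in each pair is nonzero.

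The key input is the standard finiteness principle that for every $z \in K^\times$ one has $|z|_v = 1$ for all but finitely many non-Archimedean $v$. Applying this simultaneously to each of the finitely many nonzero elements among $x_1, x_2, y_1, y_2, x_1 y_2 - x_2 y_1$, for all but finitely many non-Archimedean $v$ the numerator of $\delta_v(x, y)$ equals $1$ and each of the two maxima in the denominator equals $1$ (a possibly zero coordinate contributes $0$, which is dominated in the max by the other coordinate of absolute value $1$). Hence $\delta_v(x,y) = 1$ off a finite set $S_{xy}$, as required. There is no serious obstacle; the argument is a routine arithmetic finiteness computation, and the only care needed is in handling projective coordinates whose entries might vanish at individual places, which the preceding book-keeping does uniformly.
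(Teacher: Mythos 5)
Your proof is correct and follows essentially the same strategy as the paper's: reduce to pairs, then invoke the standard fact that a nonzero element of $K$ has absolute value $1$ at all but finitely many places. The only cosmetic difference is that the paper handles the pair $\{\alpha,\infty\}$ and the affine pair $\{\alpha,\beta\}$ as separate cases, whereas you work uniformly in homogeneous coordinates via the formula for $\delta_v$, which neatly avoids the case split.
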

\begin{proof}
In the special case $X=\{\alpha,\infty\}$ for $\alpha\in K$, this is just the well known fact that $|\alpha|_v\leq1$ for all but finitely many places $v$.  If $X=\{\alpha,\beta\}$ for distinct $\alpha,\beta\in K$, then $X$ is $S$-integral provided $S$ contains all of the finitely many non-Archimedean places for which either $|\alpha|_v>1$, $|\beta|_v>1$, or $|\alpha-\beta|_v\neq1$.  Finally, for arbitrary finite $X=\{\alpha_1,\alpha_2,\dots,\alpha_n\}$, the result follows by taking $S$ to be the union of the finite sets of places with respect to which each of the pairs $\{\alpha_i,\alpha_j\}$ is $S$-integral.
\end{proof}

\begin{lem}\label{S-IntegralLemma}
Let $K$ be a number field, and let $S$ be a finite set of places of $K$ including all of the Archimedean places.  Suppose that $\{\alpha_1,\alpha_2,\alpha_3\}$ is an $S$-integral set of three distinct points in $\PP^1(K)$.  Then there exist only finitely many $\gamma\in \PP^1(K)$ such that $\{\alpha_1,\alpha_2,\alpha_3,\gamma\}$ is $S$-integral.
\end{lem}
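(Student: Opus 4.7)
The plan is to reduce to the classical $S$-unit equation by moving the three-point set $\{\alpha_1,\alpha_2,\alpha_3\}$ to the standard triple $\{0,1,\infty\}$ via a Möbius transformation, at the cost of enlarging $S$ by finitely many places.

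First I would choose $\varphi\in\PGL_2(K)$ sending $\alpha_1\mapsto\infty$, $\alpha_2\mapsto 0$, $\alpha_3\mapsto 1$. Writing $\varphi(x)=(ax+b)/(cx+d)$ with $a,b,c,d\in K$ and $ad-bc\neq 0$, one enlarges $S$ to a finite set $S'\supseteq S$ containing every place $v$ at which one of $a,b,c,d$ fails to be $v$-integral or at which the determinant $ad-bc$ is not a $v$-unit. At every place $v\notin S'$, the matrix of $\varphi$ reduces to an invertible matrix over $\FF_v$, so $\varphi$ induces a well-defined bijection $\bar\varphi:\PP^1(\FF_v)\to\PP^1(\FF_v)$ and the diagram involving reduction commutes. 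Consequently $\varphi$ preserves $S'$-integrality of finite subsets of $\PP^1(K)$: if $\bar x\neq\bar y$ in $\PP^1(\FF_v)$ for some $v\notin S'$, then $\overline{\varphi(x)}=\bar\varphi(\bar x)\neq\bar\varphi(\bar y)=\overline{\varphi(y)}$.

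Next, since $S\subseteq S'$, any $\gamma\in\PP^1(K)$ for which $\{\alpha_1,\alpha_2,\alpha_3,\gamma\}$ is $S$-integral is a fortiori $S'$-integral, and then $\{\infty,0,1,\varphi(\gamma)\}$ is $S'$-integral by the previous paragraph. Setting $\delta=\varphi(\gamma)$, the $S'$-integrality of the pairs $\{\infty,\delta\}$, $\{0,\delta\}$, $\{1,\delta\}$ translates (using the remarks in the paper immediately before and after Lemma~\ref{FiniteSLemma}) into $|\delta|_v\leq 1$, $|\delta|_v\geq 1$, and $|\delta-1|_v=1$ for every $v\notin S'$. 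Thus both $\delta$ and $1-\delta$ lie in $\Ocal_{S'}^{\times}$.

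Finally, by the classical $S$-unit equation theorem of Mahler--Evertse (a consequence of Siegel's theorem), the equation $u+v=1$ with $u,v\in \Ocal_{S'}^{\times}$ has only finitely many solutions, so there are only finitely many possible values of $\delta=\varphi(\gamma)$, hence only finitely many possible $\gamma$. The main step, and the only nontrivial input, is the invocation of the $S$-unit equation finiteness result; the rest is bookkeeping to ensure the Möbius change of coordinates is compatible with reduction modulo $v$ at all but finitely many places.
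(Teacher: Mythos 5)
Your proposal is correct and follows essentially the same route as the paper's (sketched) proof: normalize $\{\alpha_1,\alpha_2,\alpha_3\}$ to $\{\infty,0,1\}$ by a M\"obius map after enlarging $S$, and then reduce to the finiteness of solutions to the $S$-unit equation $u+v=1$. You have simply filled in the bookkeeping (choosing $S'$ so the matrix of $\varphi$ is invertible modulo $v$ for $v\notin S'$, and verifying that $\delta,1-\delta\in\Ocal_{S'}^\times$) that the paper leaves to the reader.
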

\begin{proof}
This is a well-known variation on a standard finiteness result on $S$-units.  To sketch the proof, after applying a M\"obius transformation and possibly enlarging $S$, it suffices to show that there are only finitely many $\gamma\in K$ such that $\{0,1,\infty,\gamma\}$ is $S$-integral.  Each such $\gamma$ leads to a solution $(\gamma,1-\gamma)$ in $S$-units to the equation $x+y=1$, of which it is well known that there can only be finitely many; see for example \cite{bombierigubler}, Theorem 5.2.1.
\end{proof}

\begin{proof}[Proof of Theorem \ref{PeriodicReductionTheorem}]
Since $\alpha$ is not strictly preperiodic with respect to $f$, either $\alpha$ is periodic or it is not preperiodic at all.  If $\alpha$ is $f$-periodic, then $\bar\alpha$ is $\bar{f}$-periodic in $\PP^1(\FF_v)$ whenever $v$ is a place at which $v$ has good reduction, so the result is trivial in this case.  

Hence we may now assume that $\alpha$ is not $f$-preperiodic.  Assume that the desired conclusion of the Theorem is false.  Thus there exists a finite set of places $S$ of $K$ such that, for all places $v\not\in S$, $f$ has good reduction at $v$ and $\bar\alpha$ is not $\bar{f}$-periodic in $\PP^1(\FF_v)$.

For each $m\geq0$ set $\alpha_m=f^m(\alpha)$.  Extend the sequence $\{\alpha_m\}$ by selecting $\alpha_{-2},\alpha_{-1}\in\PP^1(\Kbar)$ with $f(\alpha_{-1})=\alpha_0=\alpha$ and $f(\alpha_{-2})=\alpha_{-1}$.  Let $L=K(\alpha_{-2},\alpha_{-1})$.  Let $T$ be the set of all places of $L$ lying above places in $S$. 

Let $w$ be a place of $L$ with $w\notin T$ and let $v$ be the place of $K$ lying under $w$; thus $f(x)$ still has good reduction at $w$ because it has good reduction at $v$.  Moreover, $\bar\alpha_0=\bar\alpha$ is not periodic in $\PP^1(\FF_w)$, because $\bar{f}^m(\bar\alpha)=\bar\alpha$ in $\PP^1(\FF_w)$ is also valid as an identity in $\PP^1(\FF_v)$, as both $f$ and $\alpha$ are defined over $K$.  

Note that for each individual $m\geq1$ the four points $\bar\alpha_{-2},\bar\alpha_{-1},\bar\alpha_{0},\bar\alpha_{m}$ are distinct in $\PP^1(\FF_w)$.  Indeed, $\bar\alpha_{0}\neq\bar\alpha_{m}$ in $\PP^1(\FF_w)$ because $\bar\alpha_{0}$ is not $\bar{f}$-periodic in $\PP^1(\FF_w)$.  If $\bar\alpha_{-1}=\bar\alpha_{0}$ in $\PP^1(\FF_w)$, then applying $\bar{f}$ to both sides would yield $\bar\alpha_{0}=\bar\alpha_{1}$ in $\PP^1(\FF_w)$, a contradiction because $\bar\alpha_0$ is not $\bar{f}$-periodic in $\PP^1(\FF_w)$; thus $\bar\alpha_{-1}\neq\bar\alpha_{0}$ in $\PP^1(\FF_w)$.  Similar reasoning shows that $\bar\alpha_{-1}\neq\bar\alpha_{m}$, $\bar\alpha_{-2}\neq\bar\alpha_{-1}$, $\bar\alpha_{-2}\neq\bar\alpha_{0}$, and $\bar\alpha_{-2}\neq\bar\alpha_{m}$ in $\PP^1(\FF_w)$.

As $w\not\in T$ was arbitrary, we have obtained a sequence of distinct points $\alpha_m=f^m(\alpha)$ in $\PP^1(L)$ such that the set of four points $\{\alpha_{-2},\alpha_{-1},\alpha_{0},\alpha_{m}\}$ is $T$-integral; this is a violation of Theorem \ref{S-IntegralLemma}, and the contradiction completes the proof.
\end{proof}

The following alternate proof of Theorem \ref{PeriodicReductionTheorem} is slightly simpler than our first proof, but it is less self-contained in that it relies on Silverman's theorem on integral points in orbits.  This proof was shown to us by Tom Tucker, and it is similar in spirit to the argument used in \cite{MR2885573} Lemma 4.3.

\begin{proof}[Alternate proof of Theorem \ref{PeriodicReductionTheorem}]
Again we may assume that $\alpha$ is not $f$-preperiodic as the result is trivial otherwise.  If the desired conclusion is false, then there exists a finite set of places $S$ of $K$ such that, for all places $v\not\in S$, $f$ has good reduction at $v$ and $\bar\alpha$ is not $\bar{f}$-periodic in $\PP^1(\FF_v)$.

Given any point $\beta\in\PP^1(K)$ which is not an exceptional point for $f$, Silverman's integral points in orbits theorem (\cite{MR1240603} Theorem 2.2) implies that there exist at most finitely many $n\geq1$ such that the pair $\{f^n(\alpha),\beta\}$ is $S$-integral.  (The result of \cite{MR1240603} is stated in the special case that $\beta=\infty$, but the more general statement follows trivially from a change of coordinates.) Thus there exists some $n\geq1$ and some place $v\notin S$ for which $\bar{f}^{n}(\bar\alpha)=\bar\beta$ in $\PP^1(\FF_{v})$.  Taking $\beta=\alpha$ implies that $\bar\alpha$ is $\bar{f}$-periodic for some place $v\notin S$, a contradiction of the initial assumption.
\end{proof}

\section{The proof of Theorem \ref{MainThmIntro}}

The main idea behind the proof of Theorem \ref{MainThmIntro} makes essential use of the following simple lemma.

\begin{lem} \label{AbelianSplittingLemma}
Let $K$ be a number field and let $F(x)\in K[x]$ be irreducible with abelian Galois group over $K$.  If $v$ is a place of $K$ and $F(x)$ has a root in $K_v$, then $F(x)$ splits completely over $K_v$.
\end{lem}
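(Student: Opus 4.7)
The plan is to reduce the lemma to the standard theory of decomposition groups in Galois extensions of number fields. Fix a root $\alpha$ of $F$ in $\Kbar$ and set $L = K(\alpha)$. Because $F$ is irreducible over $K$ and its Galois group (meaning $\Gal(M/K)$, where $M$ is the splitting field of $F$) is abelian, every subgroup of $\Gal(M/K)$ is normal; in particular the stabilizer of $\alpha$ is normal. Combined with the transitivity of $\Gal(M/K)$ on the roots of $F$, this forces the stabilizer of $\alpha$ to fix every root, hence to be trivial, so that $L = M$ is itself Galois abelian over $K$ of degree $n = \deg F$.

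Next, I invoke the standard correspondence between places of $L$ above $v$ and irreducible factors of $F$ in $K_v[x]$: the decomposition $L \otimes_K K_v \cong \prod_{w \mid v} L_w$ yields a factorization $F(x) = \prod_{w \mid v} F_w(x)$ in $K_v[x]$ into distinct irreducible factors with $\deg F_w = [L_w : K_v]$. Because $L/K$ is Galois, the decomposition groups $D_w \subseteq \Gal(L/K)$ at the various places $w \mid v$ are pairwise conjugate in $\Gal(L/K)$, and the abelian hypothesis upgrades this to their being all equal. In particular, all of the local degrees $[L_w : K_v] = |D_w|$ coincide.

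The hypothesis that $F$ has a root in $K_v$ says that some irreducible factor $F_w$ has degree one, i.e., $|D_w| = 1$ for some $w \mid v$; combined with the previous paragraph, this forces $|D_w| = 1$ for every $w \mid v$, so that $L_w = K_v$ and every $F_w$ is linear throughout, which is precisely the assertion that $F$ splits completely over $K_v$. The argument poses no substantive obstacle; the only point requiring care is the initial reduction, namely verifying that ``$F$ irreducible with abelian Galois group'' entails ``$L = K(\alpha)$ is abelian Galois over $K$,'' after which the rest is a routine unpacking of decomposition-group theory.
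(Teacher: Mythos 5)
Your proof is correct, but it takes a heavier route than the paper's. You and the paper both establish the same key intermediate fact: because $\Gal(M/K)$ is abelian and $F$ is irreducible, the stem field $K(\alpha)$ is already Galois over $K$ and therefore contains all roots of $F$ (the paper states this as ``$K(\alpha)/K$ is Galois, so $F$ splits over $K(\alpha)$''; you phrase it as ``the stabilizer of $\alpha$ is normal and, by transitivity, fixes every root, hence is trivial, so $K(\alpha)=M$''). At that point the paper finishes in one line: it chooses $\alpha$ to be the given root lying in $K_v$, observes that then $K(\alpha)\subseteq K_v$, and concludes that $F$ splits over $K_v$. You instead fix an abstract root $\alpha\in\Kbar$ and bring in the full decomposition-group apparatus — the splitting $L\otimes_K K_v\cong\prod_{w\mid v}L_w$, the correspondence between $w\mid v$ and the irreducible factors of $F$ over $K_v$, the conjugacy (hence, by abelianness, equality) of the $D_w$, and the identity $|D_w|=[L_w:K_v]$ — to argue that all local degrees are equal and therefore all equal to $1$. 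That is a valid and instructive argument (it recovers the familiar ``decomposition type is constant in an abelian extension'' picture), but it is strictly more machinery than the problem needs: the crucial simplification you left on the table is that the hypothesis hands you a root of $F$ already sitting inside $K_v$, which lets you sidestep the local–global factorization entirely.
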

\begin{proof}
Suppose that $F(x)$ has a root $\alpha$ in $K_v$, and let $L\subseteq\overline K_v$ be a splitting field for $F(x)$ over $K$.  Since $\Gal(L/K)$ is abelian, the extension $K(\alpha)/K$ is Galois, and because $F(x)$ is irreducible, this means that $F(x)$ splits completely over $K(\alpha)$.  As $K(\alpha)\subseteq K_v$, we obtain that $F(x)$ splits completely over $K_v$.
\end{proof}

Next, we briefly review the notions of the canonical height associated to a rational map, as well as the local equidistriubution of dynamically small points.  Let $f(x)\in K(x)$ be a rational map of degree at least $2$.  The Call-Silverman canonical height $\hhat_f:\PP^1(\Kbar)\to\RR$ is the unique real valued function on $\PP^1(\Kbar)$ which differs from the ordinary absolute Weil height $h:\PP^1(\Kbar)\to\RR$ by a constant bound, and which satisfies the dynamical relation 
\begin{equation}\label{CanonicalHeightFE}
\hhat_f(f(\alpha))=\deg(f)\hhat_f(\alpha)
\end{equation}
for all $\alpha\in\PP^1(\Kbar)$.  See Silverman \cite{MR2316407} $\S$ 3.4 for more background.

For each place $v$ of $K$, let $\Psf_v^1$ be the projective line over $\CC_v$ in the sense of Berkovich. In the archimedean case, $\CC_v=\CC$ and $\Psf_v^1=\PP^1(\CC_v)$ is the usual Riemann sphere.  In the non-Archimedean case, $\Psf_v^1$ is a strictly larger analytic compactification of the ordinary projective line $\PP^1(\CC_v)$; see Berkovich \cite{MR1070709} and Baker-Rumely \cite{MR2599526} for more background on this space.

Given a point $\alpha\in\PP^1(\Kbar)$ and a place $v$ of $K$, let $[\alpha]_v$ be the unit probability measure on $\Psf_v^1$ which is supported equally on each one of the $[K(\alpha):K]$ distinct embeddings of $\alpha$ into $\Psf_v^1$.  In other words,
\[
[\alpha]_v = \frac{1}{[K(\alpha):K]}\sum_{\sigma:K(\alpha)\hookrightarrow\CC_v}\delta_{\sigma(\alpha)},
\]
where $\delta_t$ denotes the Dirac measure on $\Psf_v^1$ supported at the point $t$.  Each embedding $\sigma:K(\alpha)\hookrightarrow\CC_v$ induces an extended embedding $\sigma:\PP^1(K(\alpha))\hookrightarrow\Psf_v^1$ using projective coordinates and the inclusion $\PP^1(\CC_v)\hookrightarrow\Psf_v^1$.

The equidistribution theorem of Baker-Rumely \cite{MR2244226}, Chambert-Loir \cite{MR2244803}, and Favre-Rivera-Letelier \cite{MR2092012} states that, given any rational map $f(x)\in K(x)$ of degree at least 2 and any place $v$ of $K$, there exists a unit Borel measure $\mu_{f,v}$ on $\Psf_v^1$ which describes the limiting distribution of $\Gal(\Kbar/K)$-orbits of dynamically small points for $f$.  The precise meaning of this last statement is that, if $\{\alpha_n\}$ is a sequence of distinct points in $\PP^1(\Kbar)$ with $\hhat_f(\alpha_n)\to0$, then $[\alpha]_v\to\mu_{f,v}$ weakly, in the sense that $\int \varphi d[\alpha_n]_v\to \int\varphi d\mu_{f,v}$ for all continuous functions $\varphi:\Psf_v^1\to\RR$.

The following result, Proposition \ref{MainNotAbelianProp}, is the main technical step of the proof of Theorem \ref{MainThmIntro}.  The idea of the proof is that, if $\Gal(K_\infty/K)$ is abelian and if there exists a non-Archimedean place of $K$ satisfying certain favorable conditions, then Lemma \ref{AbelianSplittingLemma} can be iterated infinitely many times, along a sequence traversing up the tree of iterated inverse images of the base point $\alpha$, leading to a contradiction of the equidistrubtion theorem.  The proof also makes essential use of a deep result of non-Archimedean dynamics due to Benedetto-Ingram-Jones-Levy \cite{MR3265554}.

\begin{prop} \label{MainNotAbelianProp}
Let $f(x)\in K(x)$ be a PCF rational map of degree $d\geq2$ and let $\alpha\in K$ be a non-preperiodic point for $f$.  Let $K_\infty=K_\infty(f,\alpha)$ be the infinite arboreal Galois extension of $K$ associated to the pair $(f,\alpha)$, as described in $\S$ \ref{IntroSect}.  Suppose that there exists a non-Archimedean place $v$ of $K$ such that all of the following conditions hold.
\begin{itemize}
\item[{\bf (A)}] The characteristic of $\FF_v$ is $>d$,
\item[{\bf (B)}] $f$ has good reduction at $v$,
\item[{\bf (C)}] $|\alpha|_v\leq 1$, and
\item[{\bf (D)}] $\bar\alpha$ is $\bar{f}$-periodic in $\PP^1(\FF_v)$.
\end{itemize}
If $f$ has any periodic critical points in $\PP^1(\Kbar)$, denote by $M$ the largest $f$-period among all periodic critical points of $f$.  If such point(s) exist, assume that the place $v$ also satisfies the following condition. 
\begin{itemize}
\item[{\bf (E)}] $\bar\beta\neq\bar c$ in $\PP^1(\FF_v)$, for all $\beta$ in the first $M$ terms $\alpha,f(\alpha),\dots, f^{M-1}(\alpha)$ of the forward orbit of $\alpha$, and all critical points $c$ of $f$ lying in $\PP^1(K_v)$.
\end{itemize}
Then $\Gal(K_\infty/K)$ is not abelian.
\end{prop}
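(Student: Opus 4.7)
The plan is to assume for contradiction that $\Gal(K_\infty/K)$ is abelian, and to use the hypotheses {\bf (A)}--{\bf (E)} to construct an infinite sequence of iterated $f^n$-preimages of $\alpha$ inside a single $v$-adic disc of radius $\epsilon_v$, where $n$ is the $\bar{f}$-period of $\bar\alpha$. Lemma \ref{AbelianSplittingLemma} will then confine all Galois conjugates of these preimages to a fixed compact subset $Z \subset \Psf_v^1$ disjoint from the Gauss point, contradicting the equidistribution theorem together with the standard fact that the equilibrium measure $\mu_{f,v}$ of a map with good reduction is the Dirac mass at the Gauss point.

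Let $n \geq 1$ be the $\bar f$-period of $\bar\alpha$, which exists by {\bf (D)}. Since $f$ has good reduction, so does $f^n$, and $\bar f^n(\bar\alpha) = \bar\alpha$; by {\bf (C)} the disc $D := D_{\epsilon_v}(\alpha)$ lies in $\Ocal_v$. I apply Lemma \ref{MainLocalLemma} to $(f^n,\alpha)$ on $D$, and first rule out the attracting sub-case $|(f^n)'(\alpha)|_v < 1$: if it held, Lemma \ref{MainLocalLemma}{\bf (c)} would produce an attracting $f^n$-fixed point $\gamma \in D \subseteq K_v$, giving an attracting $f$-periodic cycle of period $m \mid n$. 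Under the tameness hypothesis {\bf (A)}, the Benedetto-Ingram-Jones-Levy theorem \cite{MR3265554} forces this cycle to be super-attracting, so it contains a critical point $c = f^j(\gamma) \in \PP^1(K_v)$ for some $0 \leq j < m$. Since $f$ is PCF, $c$ is periodic with $f$-period $m \leq M$; but then $\bar c = \bar f^j(\bar\alpha) = \overline{f^j(\alpha)}$ with $j < M$, violating {\bf (E)}. (If $f$ has no periodic critical points at all, the same reasoning already contradicts PCF without recourse to {\bf (E)}.)

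Hence $|(f^n)'(\alpha)|_v = 1$, and the argument in the proof of Lemma \ref{MainLocalLemma}{\bf (c)} forces $|(f^n)'(\beta)|_v = 1$ for every $\beta \in D$. Iterating Lemma \ref{MainLocalLemma}{\bf (d)} applied to $f^n$, I recursively select $\gamma_{k+1} \in D$ with $f^n(\gamma_{k+1}) = \gamma_k$ (starting from $\gamma_0 = \alpha$). The sequence $\{\gamma_k\} \subseteq D \cap K_v$ satisfies $f^{nk}(\gamma_k) = \alpha$, and the $\gamma_k$ are pairwise distinct: if $\gamma_k = \gamma_{k'}$ with $k > k' \geq 0$, then applying $f^{nk'}$ gives $\alpha = f^{nk'}(\gamma_k)$, so $\alpha = f^{nk}(\gamma_k) = f^{n(k-k')}(\alpha)$, contradicting that $\alpha$ is not $f$-periodic. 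Lemma \ref{AbelianSplittingLemma} applied to the minimal polynomial of $\gamma_k$ over $K$ then forces every $\Gal(\Kbar/K)$-conjugate of $\gamma_k$ to lie in $K_v$; since each such conjugate lies in $f^{-nk}(\alpha)$, its reduction lies in the finite set
\[
E := \bigcup_{k \geq 0} \bar f^{-nk}(\bar\alpha) \subseteq \PP^1(\FF_v).
\]
It follows that every measure $[\gamma_k]_v$ is supported in the fixed compact set
\[
Z := \bigcup_{c \in E} \overline{D_{\epsilon_v}(c)} \subset \Psf_v^1
\]
(closure taken in $\Psf_v^1$), a finite union of Berkovich closed discs of radius $\epsilon_v < 1$, which does not contain the Gauss point. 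Since $\hhat_f(\gamma_k) = d^{-nk}\hhat_f(\alpha) \to 0$ and the $\gamma_k$ are distinct, the equidistribution theorem yields $[\gamma_k]_v \to \mu_{f,v}$ weakly; but $\mu_{f,v}$ is the Dirac mass at the Gauss point under good reduction, while any weak limit of measures supported in $Z$ is supported in $Z$, delivering the desired contradiction.

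The main obstacle I anticipate is the attracting-case step: pinpointing the precise form of the Benedetto-Ingram-Jones-Levy theorem needed under hypothesis {\bf (A)}, and confirming that the super-attracting critical point produced genuinely lies in $\PP^1(K_v)$ (rather than only $\PP^1(\CC_v)$) so that condition {\bf (E)} applies. The remainder is a careful but routine assembly of Lemmas \ref{MainLocalLemma} and \ref{AbelianSplittingLemma} together with the well-known description of $\mu_{f,v}$ at good-reduction places.
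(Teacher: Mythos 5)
Your proposal is correct and follows the paper's own two-step argument essentially verbatim: first rule out $|(f^n)'(\alpha)|_v<1$ via Lemma~\ref{MainLocalLemma}{\bf (c)}, the Benedetto--Ingram--Jones--Levy theorem, and hypothesis {\bf (E)}; then use Lemma~\ref{MainLocalLemma}{\bf (d)} to build an infinite sequence of $K_v$-rational iterated $f^n$-preimages of $\alpha$, apply Lemma~\ref{AbelianSplittingLemma} to force all conjugates into $K_v$, and derive a contradiction with equidistribution to the Gauss point. The only cosmetic differences are that the paper simply uses that $\PP^1(K_v)$ is a compact set avoiding the Gauss point (rather than your explicit union of Berkovich discs $Z$), and it splits the attracting case explicitly into $(f^n)'(\gamma)\neq 0$ versus $(f^n)'(\gamma)=0$ rather than asserting at once that BIJL together with PCF forces the cycle to be super-attracting; neither difference affects correctness.
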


\begin{proof}
By hypothesis {\bf (D)}, there exists $n\geq1$ such that $\bar{f}^n(\bar\alpha)=\bar\alpha$ in $\FF_v$.  Since $f$ has good reduction at $v$, so does $f^n$ (\cite{MR2316407} Theorem 2.18).  

\medskip

{\bf Step 1:} We first show that
\begin{equation}\label{DerivativeNonvanishingStep}
|(f^n)'(\alpha)|_v=1.
\end{equation}
That $|(f^n)'(\alpha)|_v\leq1$ follows from Lemma \ref{MainLocalLemma} {\bf(b)}.  Assuming that $|(f^n)'(\alpha)|_v<1$, we will obtain a contradiction.  

First, applying Lemma \ref{MainLocalLemma} {\bf (c)}, the assumption $|(f^n)'(\alpha)|_v<1$ implies that there exists an $n$-periodic point $\gamma$ in $D_{\epsilon_v}(\alpha)$ such that $|(f^n)'(\gamma)|_v<1$; in particular, $\gamma\in\PP^1(K_v)$.  If $(f^n)'(\gamma)\neq0$, then since the residue characteristic of $v$ is $>d$, it follows that the cycle containing $\gamma$ strictly attracts a critical orbit by Theorem 1.5 of Benedetto-Ingram-Jones-Levy \cite{MR3265554}, a contradiction of the assumption that $f$ is PCF.  

Thus we may assume that $(f^n)'(\gamma)=0$; by the chain rule this implies that one of the points $\gamma,f(\gamma),f^2(\gamma),\dots,f^{n-1}(\gamma)$ in the cycle containing $\gamma$ is a periodic critical point lying in $\PP^1(K_v)$.  Letting $m$ denote the minimal period of this critical point, it follows from the definition of the integer $M$ that $m\leq M$.  Thus $\gamma$ has minimal period $m$ as well, so one of the points $\gamma,f(\gamma),f^2(\gamma),\dots,f^{M-1}(\gamma)$ is a critical point, say $c=f^i(\gamma)$ is a critical point for $0\leq i\leq M-1$.  It follows that in $\PP^1(\FF_v)$, we have $\bar{c}=\bar{f}^i(\bar\gamma)=\bar{f}^i(\bar\alpha)$, which contradicts condition {\bf (E)} of the hypotheses. The contradiction completes the proof of (\ref{DerivativeNonvanishingStep}).

\medskip

{\bf Step 2:} We turn now to the proof that $\Gal(K_\infty/K)$ is not abelian.  Assuming that $\Gal(K_\infty/K)$ is abelian, we will obtain a contradiction.  Lemma \ref{MainLocalLemma} {\bf (d)} and (\ref{DerivativeNonvanishingStep}) show that $f^n$ restricts to a surjective map 
\[
f^n: D_{\epsilon_v}(\alpha)\to D_{\epsilon_v}(\alpha).
\]
We may therefore define a sequence $\{\alpha_m\}$ in $D_{\epsilon_v}(\alpha)$ recursively by $\alpha_0=\alpha$ and $f^n(\alpha_{m+1})=\alpha_m$; in particular $\alpha_m\in K_v$ for all $m$.  Moreover, the points $\alpha_m$ are distinct, because if $\alpha_{m'}=\alpha_{m}$ for $m'<m$, then $\alpha_m$ would be periodic.  Then $\alpha=\alpha_0=f^{mn}(\alpha_m)$ would be periodic, a contradiction.

Let $F_m(x)$ be the minimal polynomial over $K$ of $\alpha_m$.  The splitting field of $F_m(x)$ is contained in $K_\infty$ and hence is abelian over $K$; therefore applying Lemma \ref{AbelianSplittingLemma}, the polynomial $F_m(x)$ splits completely over $K_v$.  It follow that each $m\geq0$, the measure $[\alpha_m]_v$ is supported on the compact set $\PP^1(K_v)$ of $K_v$-rational type I points of the Berkovich projective line $\Psf_v^1$.  By the equidistribution theorem, we have $[\alpha_m]_v\to\mu_{f,v}$ weakly as $m\to+\infty$, and therefore
\begin{equation}\label{WrongSupport}
\supp(\mu_{f,v})\subseteq\PP^1(K_v).
\end{equation}
But (\ref{WrongSupport}) is false by the good reduction hypothesis on $v$, which implies (\cite{MR2244226}, Example 3.24) that $\mu_{f,v}$ is supported at the Gauss  point $\zeta_{0,1}$ of $\Psf_v^1$, which is not contained in $\PP^1(K_v)$.  The contradiction proves that $\Gal(K_\infty/K)$ is not abelian.
\end{proof}

\begin{proof}[Proof of Theorem \ref{MainThmIntro}]
Let $K$ be a number field, let $f(x)\in K(x)$ be a PCF rational map of degree $d\geq2$, and let $\alpha\in \PP^1(K)$ be a non-preperiodic point for $f$.  Let $K_\infty=K_\infty(f,\alpha)$ be the infinite arboreal Galois extension of $K$ associated to the pair $(f,\alpha)$.  Without loss of generality we may assume that $\alpha\neq\infty$, because if $\alpha=\infty$ then we may replace $f(x)$ with $1/f(1/x)$ and replace $\alpha=\infty$ with $\alpha=0$.

To show that $\Gal(K_\infty/K)$ is not abelian, we only need to show that a non-Archimedean place satisfying the conditions of Proposition \ref{MainNotAbelianProp} can always be found.  It is clear that conditions {\bf (A)}, {\bf (B)}, and {\bf (C)} hold at all except finitely many places of $K$.  

Condition {\bf (E)} does not need to be checked if $f$ has no periodic critical points in $\PP^1(\Kbar)$.  Assuming that $f$ does have at least one periodic critical point in $\PP^1(\Kbar)$, we can show that condition {\bf (E)} also holds at all except finitely many places.  Let $L/K$ be a finite extension with the property that all critical points of $f$ are $L$-rational.  It follows from Lemma \ref{FiniteSLemma} that there exists a finite set of places $T$ of $L$ such that the subset
\[
\{\text{critical points of }f\}\cup\{\alpha,f(\alpha),\dots, f^{M-1}(\alpha)\}
\]
of $\PP^1(L)$ is $T$-integral.  Observe also that
\[
\{\text{critical points of }f\}\cap\{\alpha,f(\alpha),\dots, f^{M-1}(\alpha)\}=\emptyset
\]
because $f$ is post critically finite and hence its critical locus does not meet the orbit of the non-preperiodic point $\alpha$.  Letting $S$ denote the set of all places of $K$ lying below any of the places in $T$, we have now demonstrated that  condition {\bf (E)} holds for all places $v$ of $K$ such that $v\notin S$.

Finally, condition {\bf (D)} holds at infinitely many places of $K$ by Theorem \ref{PeriodicReductionTheorem}, completing the proof.
\end{proof}

\section{Sparsity of abelian arboreal Galois groups}

In this section we prove Theorem \ref{IntroSparsityTheorem}.  

\begin{lem}\label{DegreeBoundLemma}
Let $d\geq2$ be an integer, let $K$ be a field, and let $f(x),g(x)\in K(x)$ be rational maps of degree $d$ such that $g=\varphi^{-1}\circ f\circ \varphi$ for some automorphism $\varphi\in\PGL_2(\Kbar)$.  Then $\varphi$ is defined over a field extension $L/K$ of degree $[L:K]\leq d^4(d+1)^2$.  

If $f$ and $g$ are polynomials and $\varphi(x)=ax+b\in\Kbar[x]$ is an affine automorphism, then the stronger bound $[L:K]\leq d^2(d+1)^2$ holds.
\end{lem}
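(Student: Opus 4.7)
My plan is to express the conjugation relation $g=\varphi^{-1}\circ f\circ\varphi$ as an explicit polynomial system in the coordinates of $\varphi$, verify that its common solution scheme over $K$ is zero-dimensional, and apply B\'ezout's theorem to bound the residue-field degrees of its closed points. Since the set of $\varphi$ conjugating $f$ to $g$ is (by hypothesis) a non-empty coset of the centralizer $\Aut(f)$, which is classically finite for $\deg f \ge 2$, the scheme will automatically be zero-dimensional.

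For the rational case, I will represent $\varphi \in \PGL_2(\Kbar)$ by homogeneous coordinates $[a:b:c:d'] \in \PP^3$, so that $\varphi(x) = (ax+b)/(cx+d')$, and write $f = F_1/F_2$ and $g = G_1/G_2$ in lowest terms with $F_i, G_i \in K[x]$ of degree $\le d$. Setting $\tilde F_i(x) := (cx+d')^d F_i\!\bigl((ax+b)/(cx+d')\bigr)$, each $\tilde F_i(x)$ is a polynomial of degree $\le d$ in $x$ whose coefficients are homogeneous of degree $d$ in $(a,b,c,d')$. The identity $f\circ\varphi=\varphi\circ g$ becomes, after clearing denominators, the polynomial identity in $x$
\[
\tilde F_1(x)\bigl(cG_1(x)+d'G_2(x)\bigr)-\tilde F_2(x)\bigl(aG_1(x)+bG_2(x)\bigr)=0,
\]
and equating its $\le 2d+1$ coefficients in $x$ yields that many homogeneous polynomial equations of degree $d+1$ in $(a,b,c,d')$ over $K$, cutting out a projective subscheme $V \subseteq \PP^3_K$. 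Since $\deg f \ge 2$, the centralizer $\Aut(f) \subseteq \PGL_2$ is finite, so $V(\Kbar)=\varphi\Aut(f)$ is a non-empty finite set and $V$ is zero-dimensional. B\'ezout applied to any three of the defining hypersurfaces of degree $d+1$ gives $\deg V \le (d+1)^3$, so the residue field $L$ at the closed point of $V$ corresponding to $\varphi$ satisfies $[L:K] \le (d+1)^3 \le d^4(d+1)^2$.

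In the polynomial case with $\varphi(x)=ax+b$, the identity $\varphi\circ g=f\circ\varphi$ becomes the polynomial identity $f(ax+b)=ag(x)+b$ in $x$; equating coefficients produces $d+1$ equations in $(a,b)\in\AA^2_K$, each of total degree $\le d$ (the dominant contribution being from $(ax+b)^k$). Finiteness of the affine centralizer of $f$ makes the solution subscheme again zero-dimensional, and B\'ezout bounds its degree by $d^2 \le d^2(d+1)^2$, giving the sharper bound. The one delicate point is the zero-dimensionality of the solution scheme, which rests on the classical finiteness of $\Aut(f)$ (or its affine subgroup) whenever $\deg f \ge 2$; after this is checked, the proof is routine B\'ezout bookkeeping and the bounds stated in the lemma are loose by a comfortable margin.
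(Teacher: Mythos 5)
Your approach (turn the conjugation relation into a polynomial system and bound the residue degree via B\'ezout) is genuinely different from the paper's, which constructs a short chain of preimages $\alpha_3\mapsto\alpha_2\mapsto\alpha_1$ ending at a non--totally-ramified fixed point of $f$, sets $\beta_i=\varphi^{-1}(\alpha_i)$, and recovers $\varphi$ over $K(\alpha_1,\alpha_2,\alpha_3,\beta_1,\beta_2,\beta_3)$ via cross-ratios.  But there is a gap in your central step: the projective scheme $V\subseteq\PP^3_K$ cut out by the coefficient equations of $\tilde F_1(cG_1+d'G_2)-\tilde F_2(aG_1+bG_2)=0$ is \emph{not} zero-dimensional, and the equality $V(\Kbar)=\varphi\Aut(f)$ you rely on is false.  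Your equations are also satisfied by degenerate rank-one matrices whose (constant) image is a fixed point of $f$.  Explicitly, if $\lambda\in\Kbar$ satisfies $f(\lambda)=\lambda$, then for every $(c:d')\in\PP^1$ the point $(\lambda c:\lambda d':c:d')\in\PP^3$ lies in $V$: substituting $(a,b)=\lambda(c,d')$ gives $\tilde F_i(x)=(cx+d')^dF_i(\lambda)$, and your defining identity reduces to $(cx+d')^d\bigl(cG_1+d'G_2\bigr)\bigl(F_1(\lambda)-\lambda F_2(\lambda)\bigr)=0$, which holds identically in $x$ because $\lambda$ is a fixed point of $f=F_1/F_2$.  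So $V$ contains at least one projective line, and in fact one such line for every fixed point of $f$ (including the line $(a:b:0:0)$ when $\infty$ is fixed).  Consequently no three of the defining hypersurfaces have a zero-dimensional intersection, and the naive B\'ezout bound $\deg V\le(d+1)^3$ does not apply.  You can see all of this concretely with $f=g=x^d$ and $\varphi=\mathrm{id}$: the lines $a=b=0$, $c=d'=0$, and $a=c,\ b=d'$ all lie entirely inside $V$.

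This does not necessarily kill the method, but it requires ideas you have not supplied.  One could try to invoke a refined B\'ezout theorem (bounding the sum of degrees of \emph{all} irreducible components, positive-dimensional ones included, of a triple intersection) and then argue that the Galois orbit of $\varphi$ forms one such component defined over $K$; or one could intersect with the affine $\PGL_2$-locus $ad'-bc\ne0$ and control the degenerate boundary.  Either way one must verify that $\varphi$ is an isolated point of a suitable auxiliary scheme cut out over $K$, and it is no longer clear that $(d+1)^3$ survives.  The polynomial case is milder, since there the degenerate locus $a=0$ meets your equations only in the finitely many pairs $(0,b)$ with $f(b)=b$; but you still need to exhibit two specific coefficient equations whose projective closures in $\PP^2$ have no common component and no intersection on the line at infinity before affine B\'ezout yields $d^2$, and the phrase ``finiteness of the affine centralizer makes the solution subscheme zero-dimensional'' skips exactly this verification.
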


\begin{proof}
Let $\alpha_1\in\PP^1(\Kbar)$ be a fixed point of $f$ which is not totally ramified; such a point always exists by an elementary argument, or one could use \cite{MR2316407} Theorem 1.14.  Since $\alpha_1$ is not totally ramified we can find a point $\alpha_2\in f^{-1}(\alpha_1)$ which is distinct from $\alpha_1$.  Selecting any $\alpha_3\in f^{-1}(\alpha_2)$, we now have a triple $\{\alpha_1,\alpha_2,\alpha_3\}$ of distinct points in $\PP^1(\Kbar)$ with directed graph 
\begin{center}
\begin{tikzpicture}[node distance={15mm}, thick] 
\node (1) {$\alpha_3$}; 
\node (2) [right of=1] {$\alpha_2$};
\node (3) [right of=2] {$\alpha_1$};
\draw[->] (1) -- (2);
\draw[->] (2) -- (3);
\draw[->] (3) to [out=45,in=-45,looseness=5] (3);
\end{tikzpicture} 
\end{center}
under the action of $f$.  Since $\alpha_1$ is a root of the degree $d+1$ equation $f(x)=x$, we have degree bounds
\begin{equation*}
\begin{split}
[K(\alpha_1):K] & \leq d+1 \\
[K(\alpha_1,\alpha_2):K(\alpha_1)] & \leq d \\
[K(\alpha_1,\alpha_2,\alpha_3):K(\alpha_1,\alpha_2)] & \leq d
\end{split}
\end{equation*}
from which we deduce that $[K(\alpha_1,\alpha_2,\alpha_3):K]\leq d^2(d+1)$.  Setting $\beta_i=\varphi^{-1}(\alpha_i)$, we have $[K(\beta_1,\beta_2,\beta_3):K]\leq d^2(d+1)$ as well, as the $\beta_i$ have the same directed graph under $g$ as the $\alpha_i$ have under $f$.

Set $L=K(\alpha_1,\alpha_2,\alpha_3,\beta_1,\beta_2,\beta_3)$; thus $[L:K]\leq d^4(d+1)^2$.  Since the $\alpha_i$ are distinct and the automorphism $\varphi$ takes $\{\beta_1,\beta_2,\beta_3\}$ in $\PP^1(L)$ to $\{\alpha_1,\alpha_2,\alpha_3\}$ in $\PP^1(L)$, we can conclude that $\varphi$ is defined over $L$, as we can express $\varphi$ using cross-ratios.

Suppose now that $f$ and $g$ are polynomials and that $\varphi(x)=ax+b\in\Kbar[x]$ is an affine automorphism.  Then we only need distinct points $\alpha_1,\alpha_2\in \Kbar$ such that $\alpha_1$ is fixed by $f$ and $f(\alpha_2)=\alpha_1$.  A similar argument shows that $L=K(\alpha_1,\alpha_2,\beta_1,\beta_2)$ has degree $\leq d^2(d+1)^2$ over $K$, and $\varphi$ is defined over $L$ as it takes the triple $\{\infty,\beta_1,\beta_2\}$ to $\{\infty,\alpha_1,\alpha_2\}$.
\end{proof}

\begin{rem}
We do not know if the bounds described in Lemma \ref{DegreeBoundLemma} are best-possible. It would be an interesting problem to try to optimize this result, but we do not pursue this here.  
\end{rem}

Given polynomials $f_1(x),\dots,f_m(x)\in\Qbar[x]$ and points $\alpha_1,\dots,\alpha_n\in\Qbar$, we denote by $\QQ(f_1,\dots,f_m,\alpha_1,\dots,\alpha_n)$ the number field generated over $\QQ$ by the $\alpha_i$ and the coefficients of the $f_i$.  

Given a pair $(f,\alpha)\in\Qbar[x]\times\Qbar$, recall that $(f,\alpha)$ is a {\em potentially abelian arboreal pair} if there exists a finite extension $K/\QQ(f,\alpha)$ for which $\Gal(K_\infty(f,\alpha)/K)$ is abelian.

\begin{proof}[Proof of Theorem \ref{IntroSparsityTheorem}]
Let $d\geq2$ be an integer.  For each $T>0$, define $\Acal_{T}$ to be the set of all potentially abelian arboreal pairs $(f,\alpha)\in \Qbar[x]\times\Qbar$ with $\deg(f)=d$ such that  $[\QQ(f,\alpha):\QQ]\leq T$.  We must prove that $\Acal_{T}$ is contained in a finite union of $\Qbar$-conjugacy classes in $\Qbar[x]\times\Qbar$.

Let $\Bcal_{T}$ be the set of pairs in $(f,\alpha)\in \Qbar[x]\times\Qbar$ such that 
\begin{itemize}
\item $\deg(f)=d$, 
\item $f$ is PCF, 
\item $\alpha$ is $f$-preperiodic,
\item $[\QQ(f,\alpha):\QQ]\leq T$.
\end{itemize}
It follows from Theorem \ref{MainThmIntro} combined with Theorem A of Ferraguti-Ostafe-Zannier \cite{MR4686319} that for any extension $K/\QQ(f,\alpha)$, if $\Gal(K_\infty(f,\alpha)/K)$ is abelian then $f$  must be PCF and $\alpha$ must be $f$-preperiodic; therefore $\Acal_{T}\subseteq \Bcal_{T}$.  So it suffices to show that $\Bcal_{T}$ is contained in a finite union of $\Qbar$-conjugacy classes in $\Qbar[x]\times\Qbar$.

Ingram \cite{MR2885981} has proved that PCF polynomials in $\Qbar[x]$ of degree $d$ are a bounded height family in an appropriate moduli space, which implies in particular that there exists a finite list $f_1,f_2,\dots,f_M$ of PCF polynomials of degree $d$ in $\Qbar[x]$ such that every PCF polynomial $f(x)\in\Qbar[x]$ of degree $d$ and $[\QQ(f):\QQ]\leq T$ is $\Qbar$-conjugate to some $f_m$.

Now let $(f,\alpha)\in\Bcal_{T}$ be arbitrary.  By the preceding paragraph, we have that $f_m=\varphi^{-1}\circ f\circ \varphi$ for some $1\leq m\leq M$ and some affine automorphism $\varphi(x)=ax+b\in \Kbar[x]$ with $a\neq0$.  Taking $K=\QQ(f,f_m,\alpha)$, we may assume that $\varphi$ is defined over an extension $L/K$ of degree $[L:K]\leq C_d=d^2(d+1)^2$ using Lemma \ref{DegreeBoundLemma}.  Set $\beta=\varphi^{-1}(\alpha)\in\PP^1(L)$.   Since $\alpha$ is $f$-preperiodic, it follows that $\beta$ is $f_m$-preperiodic.  

The height of $\beta$ is bounded, depending on $d$ and $T$, but otherwise independently of the pair $(f,\alpha)$.  Indeed, since preperiodic points for a given polynomial are a set of bounded height, we obtain that $h(\beta)\leq H_m$ for some constant $H_m>0$ depending only on the polynomial $f_m$, and hence $h(\beta)\leq H:=\max_{1\leq m\leq M}H_m$.  The polynomials $f_m$, and hence the bound $H$, depend on $d$ and $T$ but not on the pair $(f,\alpha)$.

We can also bound the degree $[\QQ(\beta):\QQ]$, depending on $d$ and $T$, but otherwise independently of the pair $(f,\alpha)$.  Using standard properties of the degrees of field extensions in towers and in compositum, we have
\begin{equation*}
\begin{split}
[L:\QQ] & = [L:K][K:\QQ] \\
	& \leq C_d[\QQ(f,\alpha, f_m):\QQ] \\
	& \leq C_d[\QQ(f,\alpha):\QQ][\QQ(f_m):\QQ] \\
	& \leq C_dTD \\
\end{split}
\end{equation*}
where $D=\max_{1\leq m\leq M}[\QQ(f_m):\QQ]$.  Since $\beta\in\PP^1(L)$ we obtain $[\QQ(\beta):\QQ]\leq C_dTD$ as well.

Since the height and degree of $\beta$ have been bounded depending only on $d$ and $T$, which are fixed, there can be only finitely many possibilities for $\beta$.  Hence there are only finitely many possibilities for the pair $(f_m,\beta)$, completing the proof that $\Bcal_{T}$ is contained in a finite union of $\Qbar$-conjugacy classes in $\Qbar[x]\times\Qbar$.
\end{proof}



\def\cprime{$'$}

\end{document}